\DeclareMathAlphabet{\mathcalligra}{T1}{calligra}{m}{n}
\newtheorem{lem}{Lemma}[subsection]
\newtheorem{thm}[lem]{Theorem}
\newtheorem*{thm*}{Theorem}
\newtheorem*{metathm}{Meta-Theorem}
\newtheorem{cor}[lem]{Corollary}
\theoremstyle{definition}
\newtheorem{defn}[lem]{Definition}
\newtheorem{rem}[lem]{Remark}
\newtheorem{notn}[lem]{Notation}
\newcommand{\simto}{\xrightarrow{\sim}}
\newcommand*\@dblLabelI {}
\newcommand*\@dblLabelII {}
\newcommand*\@dblequationAux {}
\def\@dblequationAux #1,#2,%
\def\@dblLabelI{\label{#1}}\def\@dblLabelII{\label{#2}}}
\newcommand*{\doubleequation}[3][]{%
    \par\vskip\abovedisplayskip\noindent
    \if\relax\detokenize{#1}\relax
       \let\@dblLabelI\@empty
       \let\@dblLabelII\@empty
    \else 
       \@dblequationAux #1,%
    \fi
    \makebox[0.4\linewidth-1.5em]{%
     \hspace{\stretch2}%
     \makebox[0pt]{$\displaystyle #2$}%
     \hspace{\stretch1}%
    }%
    \makebox[0.2\linewidth]{\ }
    \makebox[0.4\linewidth-1.5em]{%
     \hspace{\stretch1}%
     \makebox[0pt]{$\displaystyle #3$}%
     \hspace{\stretch2}%
    }%
    \makebox[3em][r]{(%
  \refstepcounter{equation}\theequation\@dblLabelI,
  \refstepcounter{equation}\theequation\@dblLabelII)}%
  \par\vskip\belowdisplayskip
}
\renewcommand{\mathbb}{\mathbf}
\newcommand{\mc}[1]{\mathcal #1}
\newcommand{\ms}[1]{\mathscr #1}
\newcommand{\oper}[1]{\operatorname{#1}}
\newcommand{\coker}{\oper{coker}}
\newcommand{\Hom}{\oper{Hom}}
\newcommand{\Spec}{\oper{Spec}}
\newcommand{\ob}{\oper{ob}}
\newcommand{\eq}{\oper{Eq}}
\newcommand{\til}{\widetilde}
\newcommand{\wh}{\widehat}
\DeclareMathOperator{\id}{id}
\DeclareMathOperator{\rHom}{Fun_{r-Ex,\otimes}}
\DeclareMathOperator{\cB}{\mathcal{B}}
\DeclareMathOperator{\cA}{\mathcal{A}}
\DeclareMathOperator{\cG}{\mathcal{G}}
\DeclareMathOperator{\cX}{\mathcal{X}}
\DeclareMathOperator{\cS}{\mathcal{S}}
\DeclareMathOperator{\Ext}{\operatorname{Ext}}
\DeclareMathOperator{\Coh}{\operatorname{Coh}}
\DeclareSymbolFont{cyrletters}{OT2}{wncyr}{m}{n}
\DeclareMathSymbol{\Sha}{\mathalpha}{cyrletters}{"58}
\def\<{\left<}
\def\>{\right>}
\newcommand{\thing}[2]{
  {\xymatrix{#1 \ar@<3pt>[r] \ar@<-3pt>[r] & #2 }}}
\newcommand{\things}[4]{
  {\xymatrix{#1 \ar@<3pt>[r]^{#3} \ar@<-3pt>[r]_{#4} & #2 }}}
\title[Tannakian patching]{A Tannakian approach to patching}
\author{Bastian Haase}
\author{Daniel Krashen}
\author{Max Lieblich}
\date{\today}
\begin{document}

\begin{abstract}
  We use Tannakian methods to show that patching for coherent sheaves implies patching for objects in any Noetherian algebraic stack with affine stabilizers. Among other things, this gives a straightforward way to prove patching for torsors under linear algebraic groups, as well as patching for sheaves and torsors on proper algebraic spaces.
\end{abstract}

\maketitle

\tableofcontents

\newcommand{\stab}[3]{{\oper{Stab}_{#1}^{#2}(#3)}}
\newcommand{\st}[1]{\stab{p}{}{#1}}
\newcommand{\vbun}[2]{\oper{VB}^{{#1}}({{#2}})}
\newcommand{\vb}[1]{\vbun{}{#1}}
\newcommand{\qc}[1]{\oper{QCoh}{#1}}
\newcommand{\coh}[1]{\oper{Coh}{#1}}

\newcommand{\arr}[2]{
\xymatrix{
\ar@<-.5ex>[r]_{#1} \ar@<.5ex>[r]^{#2} &
}
}
\newcommand{\arrr}[5]{
\xymatrix{
#1 \ar[r] & #2 \ar@<-.5ex>[r]_{#4} \ar@<.5ex>[r]^{#5} & #3
}
}
\section{Introduction}

In this paper, we use Tannakian methods to show that the following holds in all
typical circumstances.

\begin{metathm}
  If patching holds for coherent sheaves then it holds for objects in any
  Noetherian algebraic stack with affine stabilizers.
\end{metathm}

Thus, for example, patching holds for principal bundles for linear algebraic
groups over proper schemes. As a consequence, we obtain new contexts to apply
methods of field patching, as well as new Meyer-Vietoris type sequences for
étale cohomology groups. This holds for various types of patching contexts
(formal, rigid, field patching, etc.).

The technique of field patching has been exploited in various ways to obtain
information about algebraic structures (particularly torsors for linear
algebraic groups) over function fields of curves over complete discretely
valued fields. The question of finding similar techniques to handle function
fields of higher dimensional varieties is still quite open, and of much
interest. The present manuscript provides a relative context in which we can now
obtain results in this direction.

The methods of this paper break up naturally into two parts. In the first part,
we explain how the Tannakian formalism gives a general context in which one can
extend patching results for coherent sheaves to similar results for objects in
any Noetherian algebraic stack with affine stabilizers, including $G$-torsors
for $G$ a linear algebraic group (Theorem~\ref{tannaka main}).\footnote{It is
interesting to note that this is somewhat implicit in \cite[lines~22-23,
page~358]{Faltings:Verlinde}.} This can then by applied to obtain new cases of
patching for torsors over rings and fields (see Corollaries~\ref{formal
patch},~\ref{thickened patch} and~\ref{pop cor}).

In the second part, we show that patching for coherent sheaves over a base
scheme $X$ can often be extended to patching for coherent sheaves
relative to a proper morphism $S \to X$. Combined with the above
results, we then obtain patching results for $G$-torsors on proper
algebraic spaces over $X$ (see Theorem~\ref{relative patch}).

\section{Notation and preliminaries}

\subsection{Generalities on 2-equalizers}

In this paper, by 2-category, we will mean what is often referred to as a strict
2-category, and will follow the definitions of, for example,
\cite[Section~I.2.1]{Gray}. The concept of a (strict) 2-equalizer can both 
refer to a
specific construction in the 2-category of categories, or an object with
universal properties in a general 2-category. We will have use for both and
define them below:
\begin{defn}[2-equalizers of morphisms in 2-categories]
  Let $\ms C$ be a 2-category, and $f_0, f_1: A_0 \to A_1$ a pair of morphisms
  in $\ms C$. A \emph{2-equalizer\/} of the morphisms $f_0, f_1$ is
  \begin{enumerate}
  \item an object $A$,
  \item a $1$-morphism $f:A\to A_0$, and
  \item a 2-isomorphism $\alpha: f_1 f\implies f_0 f$
  \end{enumerate}
  satisfying the following properties: 
  \begin{enumerate}
  \item given any
  other object $B$, morphism $g: B \to A_0$ and 2-isomorphism $\beta: f_1 g
  \implies f_0 g$, there is a unique morphism $h: B \to A$ such that
\[\xymatrix{
A \ar[r]^f & A_0  \\
B \ar[u]_h \ar[ru]_g
} \]
commutes, 
  \item in the above setting, the horizontal composition $\alpha: f_1g = f_1fh
\implies f_0fh = f_0g$ coincides with $\beta$,
  \item given a pair of morphisms $g_1, g_2: B \to A_0$, 2-isomorphisms
  $\beta_i: f_1 g_i \to f_0 g_i$, and corresponding morphisms $h_i: B \to
  A$ as above, there is a bijection between natural transformations $\gamma:
  h_1 \to h_2$ and natural transformations $\gamma': g_1 \to g_2$ such that
  we have a commutative diagram of natural transformations:
  \[\xymatrix{
    f_1g_1 \ar[r]^{f_0\gamma} \ar[d]_{\alpha g_1} & f_1g_2 \ar[d]^{\alpha g_2} 
\\
    f_0g_1 \ar[r]_{f_1\gamma} & f_0g_2
  }\]
  \end{enumerate}
\end{defn}
Note that the universal property is very strong, in that the diagrams are
required to commute on the nose, not up to 2-isomorphism. It follows that
the 2-equalizer, when it exists, is unique up to canonical isomorphism.

\begin{defn}[2-equalizers of functors]
  Let $\xymatrix{ C_0 \ar@<-.5ex>[r]_{\delta_1} \ar@<.5ex>[r]^{\delta_0} & C_1
  }$ be a pair of functors between (1-)categories. We define the
  \emph{equalizer\/} of $\delta_0$ and $\delta_1$, written $\eq(\delta_0,
  \delta_1)$, to be the category whose objects are pairs $(a, \phi)$ with $a
  \in \ob(C_0)$ and $\phi: \delta_1 a \to \delta_0 a$ is an isomorphism, and
  whose morphisms $(a, \phi) \to (a', \phi')$ are morphisms $f: a \to a'$ in
  $C_0$ such that the diagram
\[\xymatrix{
\delta_1 a \ar[r]^{\phi} \ar[d]_{\delta_1 f} & \delta_0 a \ar[d]^{\delta_0 f} \\
\delta_1 a' \ar[r]_{\phi'} & \delta_0 a'
}\]
commutes.
\end{defn}
It turns out that this is a 2-equalizer in the 2-category of
categories, and hence the notation is (relatively) unambiguous. 
\begin{lem} \label{2-cat limit}
  Let $\xymatrix{ C_0 \ar@<-.5ex>[r]_{\delta_1} \ar@<.5ex>[r]^{\delta_0} & C_1
  }$ be a pair of functors between (1-)categories. Then $\eq(\delta_0,
  \delta_1)$ is a 2-equalizer of $\delta_0, \delta_1$ in the 2-category
  of categories.
\end{lem}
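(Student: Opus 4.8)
The plan is to verify that $\eq(\delta_0,\delta_1)$, equipped with the obvious data, satisfies the three-part universal property in the definition of a 2-equalizer. First I would identify the structure maps: let $A = \eq(\delta_0,\delta_1)$, let $f\colon A \to C_0$ be the forgetful functor $(a,\phi)\mapsto a$, and let the 2-isomorphism $\alpha\colon \delta_1 f \implies \delta_0 f$ be the natural transformation whose component at an object $(a,\phi)$ is exactly the isomorphism $\phi\colon \delta_1 a \to \delta_0 a$. The naturality of $\alpha$ is precisely the commuting square imposed on morphisms in the definition of $\eq(\delta_0,\delta_1)$, so $\alpha$ is well-defined as a 2-isomorphism.

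Next I would check the factorization (property~(1) of the universal property). Given a category $B$, a functor $g\colon B\to C_0$, and a 2-isomorphism $\beta\colon \delta_1 g \implies \delta_0 g$, I would define $h\colon B\to A$ on objects by $h(b) = (g(b), \beta_b)$, where $\beta_b\colon \delta_1 g(b)\to \delta_0 g(b)$ is the component of $\beta$ at $b$; on morphisms $h$ sends $\psi\colon b\to b'$ to $g(\psi)$. The naturality of $\beta$ guarantees that $g(\psi)$ is indeed a morphism in $\eq(\delta_0,\delta_1)$, so $h$ is a well-defined functor, and visibly $f\circ h = g$ on the nose. For uniqueness, any $h'$ with $f h' = g$ must send $b$ to a pair $(g(b),\phi_b)$, and property~(2) forces $\phi_b = \beta_b$, pinning down $h'=h$. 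This simultaneously discharges property~(2), since by construction the horizontal composite of $\alpha$ with $h$ has component $\beta_b$ at each $b$.

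Finally I would treat property~(3), the 2-dimensional universal property. Given $g_1,g_2\colon B\to C_0$ with 2-isomorphisms $\beta_i$ and induced $h_i\colon B\to A$, a natural transformation $\gamma'\colon g_1\implies g_2$ is a family of morphisms $\gamma'_b\colon g_1(b)\to g_2(b)$ in $C_0$, while a natural transformation $\gamma\colon h_1\implies h_2$ is a family of morphisms $(g_1(b),\beta_{1,b})\to (g_2(b),\beta_{2,b})$ in $A$. By the definition of morphisms in $\eq(\delta_0,\delta_1)$, such a family is the same as a family $\gamma'_b$ in $C_0$ satisfying the square relating $\beta_{1,b}$ and $\beta_{2,b}$ through $\delta_1\gamma'_b$ and $\delta_0\gamma'_b$; this square is exactly the displayed compatibility diagram in property~(3). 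Hence $\gamma\mapsto \gamma'$ (forgetting the second coordinate) is a bijection, and I would confirm it matches the required diagram.

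I expect no single step to be a serious obstacle, since the construction of $\eq$ is built precisely to encode the equalizer data; the only real care needed is bookkeeping the direction of the isomorphisms $\phi$ and the horizontal versus vertical composition of 2-cells, so that the naturality square defining morphisms in $\eq(\delta_0,\delta_1)$ is correctly identified with the compatibility conditions~(2) and~(3). The main thing to get right is keeping the variance straight: since $\phi\colon \delta_1 a\to \delta_0 a$ points from the $\delta_1$ side to the $\delta_0$ side, I would double-check that $\alpha\colon \delta_1 f\implies \delta_0 f$ and the whiskered composites land with matching source and target throughout.
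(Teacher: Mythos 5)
Your proposal is correct and follows essentially the same route as the paper: both arguments directly unwind the definition of $\eq(\delta_0,\delta_1)$, establish that functors $B\to\eq(\delta_0,\delta_1)$ correspond exactly to pairs $(g,\beta)$ via $h(b)=(g(b),\beta_b)$, and then identify natural transformations $h_1\Rightarrow h_2$ with natural transformations $g_1\Rightarrow g_2$ satisfying the stated compatibility square. The only presentational difference is that the paper packages the first step as a bijection of functor data while you phrase it as existence plus uniqueness, which amounts to the same verification.
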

\begin{proof}
  Let $C = \eq(\delta_0, \delta_1)$, and let $D$ be any category. We first need
  to show that functors $F: D \to C$ are in bijection with pairs $(F_0,
  \beta)$, where $F_0 : D \to C_0$ is a functor, and $\beta: \delta_1 F_0
  \to \delta_0 F_0$ is a natural isomorphism. 
  
  By defnition, a morphism $F: D \to C$ gives, for every object $d$ of $D$,
  a pair $Fd = (d_0, \phi_d)$, where $d_0 \in C_0$ and $\phi_d : \delta_1
  d_0 \to \delta_0 d_0$ is an isomorphism. Set $F_0 d = d_0$, and $\beta d = 
\phi_d :
  \delta_1 F_0 d \to \delta_0 F_0 d$. For a morphism $a: d \to d'$, $Fa:
  (F_0 d, \beta d) \to (F_0 d', \beta d')$ is a morphism in $C$, and by
  definition, this gives a morphism $F_0 a: F_0 d \to F_0 d'$ such that the
  diagram
  \[\xymatrix{
    \delta_1 F_0 d \ar[r]^{\delta_1 F_0 a} \ar[d]_{\beta d} & \delta_1 F_0
    d' \ar[d]^{\beta {d'}} \\
    \delta_0 F_0 d \ar[r]_{\delta_0 F_0 a} & \delta_0 F_0 d'
  }\]
  commutes. But this exactly says that $\beta$ is a natural isomorphism
  from $\delta_1 F_0$ to $\delta_0 F_0$.

  The above shows how to obtain a pair $(F_0, \beta)$ from a morphism $F: D
  \to C$. For the reverse, suppose we have a pair $F_0 : D \to C_0$ and a
  natural isomorphism $\beta: \delta_1 F_0 \to \delta_0 F_1$. Then for each
  $d$ in $D$, we have an isomorphism $\beta d : \delta_1 F_0 d \to \delta_0
  F_0 d$, which gives an object $(F_0 d, \beta d)$ of the 2-fiber product of
  functors $C$. For a morphism $a : d \to d'$, we have a commutative square
  \[\xymatrix{
    \delta_1 F_0 d \ar[r]^{\delta_1 F_0 a} \ar[d]_{\beta d} & \delta_1 F_0
    d' \ar[d]^{\beta d'} \\
    \delta_0 F_0 d \ar[r]_{\delta_0 F_0 a} & \delta_0 F_0 d',
  }\]
  which by definition gives a map to $C$.

  Let $G: C \to C_0$ be the canonical morphism, and $\alpha: \delta_1 G \to
  \delta_0 G$ the canonical natural transformation defined by $\alpha(c,
  \phi) = \phi$.

  Now consider a pair of functors $F, F': D \to C$, and corresponding
  pairs $(F_0, \beta), (F_0', \beta')$. We want to show that horizontal
  composition with $\alpha$ gives a
  bijection between natural transformations $F \to F'$ and natural
  transformations $\gamma: F_0 \to F_0'$ such that we have a commutative
  diagram
  \[\xymatrix{
    \delta_1 F_0 \ar[r]^{\delta_1 \gamma} \ar[d]_{\beta} & \delta_1 F_0'
    \ar[d]^{\beta'} \\
    \delta_0 F_0 \ar[r]_{\delta_0 \gamma} & \delta_0F_0'.
  }\]
  But this follows from the fact that by defintion of $\alpha$, $\alpha F = 
\beta$ and $\alpha F' = \beta'$.
\end{proof}

Further, we can
use this categorical 2-equalizer to express the universal property of
2-equalizers in general as follows:
\begin{lem} \label{2-equalizer represents}
Suppose that we have a $2$-category $\ms C$, and morphisms $f_0, f_1 \in
Hom_{\ms C}(A_0, A_1)$. Suppose $(A, f, \alpha)$ is a 2-equalizer of
$f_0, f_1$. Then for every object $B$ of $\ms C$, we have an isomorphism
(not just an equivalence!) of categories
\[ Hom_{\ms C}\left(B, A \right) \to \eq(Hom_{\ms C}(B,A_0), Hom_{\ms 
C}(B,A_1))\]
defined on objects by taking $h: B \to A$ to $(fh, \alpha h)$, where $\alpha h$ 
is the horizontal composition of $\alpha: f_1 f \to f_0 f$ with $h$.
\end{lem}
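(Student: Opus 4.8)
The plan is to unwind both sides into concrete data and then read off the claim directly from the three clauses of the universal property defining $(A,f,\alpha)$. First I would make explicit the two functors whose equalizer is being taken: postcomposition (left whiskering) with $f_0$ and $f_1$ gives functors $\delta_0,\delta_1\colon \Hom_{\ms C}(B,A_0)\to\Hom_{\ms C}(B,A_1)$, with $\delta_i(g)=f_i g$ on $1$-morphisms and $\delta_i(\gamma')=f_i\gamma'$ on $2$-morphisms. By the definition of $\eq$, an object of $\eq(\delta_0,\delta_1)$ is then a pair $(g,\phi)$ with $g\colon B\to A_0$ and $\phi\colon f_1 g\implies f_0 g$ a $2$-isomorphism, and a morphism $(g,\phi)\to(g',\phi')$ is a $2$-morphism $\gamma'\colon g\to g'$ satisfying $\phi'\circ(f_1\gamma')=(f_0\gamma')\circ\phi$. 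I would then define the comparison functor $\Phi$ to send an object $h\colon B\to A$ to $(fh,\alpha h)$ and a $2$-morphism $\gamma\colon h\to h'$ to the whiskered $2$-morphism $f\gamma\colon fh\to fh'$.

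Next I would check that $\Phi$ is a well-defined functor. On objects, $\alpha h$ is a $2$-isomorphism (being a whiskering of the $2$-isomorphism $\alpha$), and because $\ms C$ is strict we have $f_i(fh)=(f_i f)h$ on the nose, so $(fh,\alpha h)$ really is an object of $\eq(\delta_0,\delta_1)$. On morphisms, $f\gamma$ lands in the equalizer precisely because the required identity $\alpha h'\circ(f_1 f)\gamma=(f_0 f)\gamma\circ\alpha h$ is exactly the interchange law for the horizontal composite of $\alpha$ with $\gamma$; functoriality (preservation of identities and of vertical composition of $2$-cells) is then just the corresponding formal properties of left whiskering.

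Finally I would establish bijectivity at both levels and conclude. Bijectivity on objects is exactly clauses (1) and (2) of the defining universal property: given $(g,\beta)$, clause (1) produces a unique $h$ with $fh=g$, and clause (2) guarantees $\alpha h=\beta$, so $h$ is the unique preimage of $(g,\beta)$ under $\Phi$. Bijectivity on each hom-set $\Hom_{\ms C}(h_1,h_2)\to\Hom_{\eq}\bigl((fh_1,\alpha h_1),(fh_2,\alpha h_2)\bigr)$ is exactly clause (3), once one checks that the commuting square appearing there is literally the equalizer-morphism condition written above (with $\beta_i=\alpha h_i$ and $\gamma'=f\gamma$). A functor that is bijective on objects and on every hom-set is an isomorphism of categories, giving the result---and in particular it is the strictness of the universal property (the equality $fh=g$ rather than a mere isomorphism) that upgrades this from an equivalence to an honest isomorphism. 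The only real thing to be careful about is the bookkeeping in this last matching step: verifying that the orientation and labeling of the square in clause (3) coincide with the equalizer-morphism condition, and that the bijection asserted there is the one induced by whiskering with $f$, so that it is genuinely the hom-set map of $\Phi$ and not some other identification.
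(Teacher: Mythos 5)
Your proof is correct and follows essentially the same route as the paper's: the paper likewise reads the object bijection off clauses (1) and (2) of the universal property and the hom-set bijection off clause (3) (via the identification of the target with the categorical 2-equalizer from the preceding lemma), just far more tersely. Your extra care in checking that whiskering with $f$ lands in the equalizer category via the interchange law, and your flag that clause (3)'s bijection must be the one induced by whiskering, are exactly the details the paper leaves implicit.
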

\begin{proof}
We note that the fact that the above extends to a fully faithful functor
follows from the previous lemma.

We will illustrate the inverse isomophism on the level of objects. Suppose we 
have an object in the equalizer
category on the right hand side. This consists of a morphism $g : B \to
A_0$ together with an isomorphism of morphisms (a 2-isomorphism) $\beta:
f_0g \implies f_1 g$. By definition, this gives a unique morphism $B
\to A$. 
\end{proof}
\begin{lem} \label{exact2eq} Suppose $\delta_0, \delta_1 : \ms A_0 \to \ms A_1$
  are a pair of functors between abelian categories. Let $\eta: \ms A \to \ms
  A_0$ be the 2-equalizer of these functors.
\begin{enumerate}
\item If $\delta_0, \delta_1$ are additive, then $\eta$ is additive and
  faithful.
\item If in addition $\delta_0, \delta_1$ are exact, then $\eta$ is faithfully
  exact (i.e. a sequence is short exact in $\ms A$ if and only if its image in
  $\ms A_0$ is short exact).
    \end{enumerate}
\end{lem}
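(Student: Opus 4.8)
The plan is to work directly with the description of $\ms A = \eq(\delta_0,\delta_1)$ as the category of pairs $(a,\phi)$ with $a\in\ms A_0$ and $\phi:\delta_1 a\simto\delta_0 a$ an isomorphism, and to transport all structure along the forgetful functor $\eta$, which sends $(a,\phi)\mapsto a$ and is the identity on underlying morphisms. For part (1), faithfulness is immediate: a morphism $(a,\phi)\to(a',\phi')$ is by definition a morphism $a\to a'$ in $\ms A_0$ satisfying a compatibility with $\phi,\phi'$, so on each hom-set $\eta$ is literally the inclusion of a subset. I would then observe that when $\delta_0,\delta_1$ are additive this subset is a subgroup, since the compatibility condition $\phi'\circ\delta_1 f=\delta_0 f\circ\phi$ is additive in $f$ (the $\delta_i$ preserve sums and composition is bilinear); hence $\ms A$ is preadditive and $\eta$ is additive on morphisms. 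To produce biproducts I would equip $a\oplus a'$ with $\phi\oplus\phi'$, using that additive functors preserve biproducts to identify $\delta_i(a\oplus a')\cong\delta_i a\oplus\delta_i a'$; the structure maps of the biproduct in $\ms A_0$ are then visibly morphisms in $\ms A$, and $\eta$ preserves the biproduct by construction.

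The heart of part (2) is the construction of kernels and cokernels in $\ms A$, and this is exactly where exactness of $\delta_0,\delta_1$ enters. Given $f:(a,\phi_a)\to(b,\phi_b)$, let $k:K\to a$ be $\ker f$ in $\ms A_0$. Since each $\delta_i$ is exact, $\delta_i k$ exhibits $\delta_i K=\ker(\delta_i f)$; the commuting square that makes $f$ a morphism of $\ms A$, together with the fact that $\phi_a,\phi_b$ are isomorphisms, forces $\phi_a$ to restrict to an isomorphism $\phi_K:\delta_1 K\simto\delta_0 K$. I would then check that $(K,\phi_K)$ together with $k$ satisfies the universal property of the kernel in $\ms A$: any test map factors uniquely through $k$ in $\ms A_0$, and the factoring map is automatically a morphism of $\ms A$ because $\delta_i k$ is monic. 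The construction of cokernels is dual, using that exact functors preserve cokernels. This shows that $\eta$ both preserves and, being faithful, reflects kernels and cokernels; in particular $\ms A$ is abelian (every mono is a kernel and every epi a cokernel, detected through $\eta$), and $\eta$ is exact.

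Finally, faithful exactness follows formally from the general principle that a faithful exact functor between abelian categories reflects exactness. A faithful functor reflects zero objects, since $\eta X=0$ makes $\id_X$ map to $0$, whence $X=0$. Because $\eta$ is exact it commutes with the formation of homology, so for any complex in $\ms A$ one has $\eta(H)\cong H(\eta(-))$; if the image complex in $\ms A_0$ is exact then $\eta(H)=0$, and reflection of zero objects gives $H=0$. (That the original three-term sequence is a complex at all also follows from faithfulness, since $\eta$ of the composite vanishes.) Hence a short sequence in $\ms A$ is exact if and only if its image in $\ms A_0$ is, as claimed.

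The main obstacle — indeed essentially the only substantive point — is the verification in the second paragraph that the structure isomorphism $\phi$ genuinely descends to kernels and cokernels and yields the \emph{universal} objects in $\ms A$, including the check that the induced factoring maps are themselves morphisms of $\ms A$. Everything else is formal once $\eta$ is known to create this kernel and cokernel data, and it is precisely here that the exactness hypothesis on $\delta_0,\delta_1$ is used.
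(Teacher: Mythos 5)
Your proposal is correct and follows essentially the same route as the paper: hom-sets in the equalizer are subgroups of hom-sets in $\ms A_0$ (giving additivity and faithfulness), and exactness of $\delta_0,\delta_1$ lets the structure isomorphisms descend uniquely to kernels and cokernels computed in $\ms A_0$, which are then shown to be the kernels and cokernels in $\ms A$. You supply somewhat more detail than the paper (biproducts, the universal-property check, and the formal reflection-of-exactness argument via reflection of zero objects), but the underlying argument is the same.
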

\begin{proof}
  Suppose $\delta_0, \delta_1$ are additive. For objects $\underline a = (a,
  \phi), \underline a' = (a', \phi')$ of $\ms A$, by definition $Hom_{\ms
    A}(\underline a, \underline a')$ is the subgroup of $Hom_{\ms A_0}(a, a')$
  consisting of those $f: a \to a'$ such that $(\phi')(\delta_1 f) = (\delta_0
  f) (\phi)$. It follows that $\eta$ is additive and faithful.

  Suppose in addition that $\delta_0, \delta_1$ are exact. Given a morphism $f:
  \underline a' \to \underline a$, if we set $k, c$ to be the kernel and
  cokernel of $\eta f$ in $\ms A_0$, then considering the diagram:
\[\xymatrix{
0 \ar[r] & \delta_1 k \ar[r] & \delta_1 a' \ar[d]^{\phi'} \ar[r] & \delta_1 a 
\ar[d]^{\phi} \ar[r] & \delta_1 c \ar[r] & 0 \\
0 \ar[r] & \delta_0 k \ar[r] & \delta_0 a' \ar[r] & \delta_0 a \ar[r] & 
\delta_0 c \ar[r] & 0
}\]
We find that since the rows are exact (since $\delta_0, \delta_1$ are exact),
and the vertical maps are isomorphisms, there are unique isomorphisms $\psi_k:
\delta_1 k \to \delta_0 k$ and $\psi_c: \delta_1 c \to \delta_0 c$ which make
the diagram commute. One can then check that $(k, \psi_k)$ and $(c, \psi_c)$
(together with the above morphisms) are the kernel and cokernel in $\ms A$ of
the map $\underline a' \to \underline a$. It follows that formation of kernels
and cokernels commutes with $\eta$ (and hence images as well, viewed as the
cokernel of a kernel). It follows that $\eta$ is faithfully exact as claimed.
\end{proof}
\begin{rem} \label{exact remark} As a consequence of Lemma \ref{exact2eq}, if
  $\delta_0,
  \delta_1$ are both additive exact functors, and if $\ms B \to \ms A$ is an
  additive functor to the 2-equalizer, then it is (faithfully) exact if the
  associated functor $\ms B \to \ms A_0$ is (faithfully) exact.
\end{rem}

\subsection{2-equalizers of abelian monoidal categories}

\begin{lem}\label{monoidality}
  If $\delta_0,\delta_1:\ms C_0\to\ms C_1$ are monoidal functors
  between monoidal categories, then $\eq(\delta_0,\delta_1)$ has a
  canonical monoidal structure given by $(a, \phi) \otimes (a', \phi')
  = (a \otimes a', \phi \otimes \phi')$, such that the map $\delta:
  \eq(\delta_0, \delta_1) \to \ms C_0$ can be naturally extended to a
  monoidal functor, and such that the canonical natural transformation
  $\delta_1 \delta \to \delta_0 \delta$ is monoidal.

  Further, if $\ms C_0, \ms C_1$ have a braiding (resp. symmetric
  braiding), and $\delta_0, \delta_1$ are braided monoidal functors,
  then $\eq(\delta_0, \delta_1)$ has a natural braided
  (resp. symmetric braided) structure so that the morphism $\delta$ is
  braided monoidal.
\end{lem}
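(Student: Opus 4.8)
The plan is to transport all the coherence data of $\ms C_0$ through the faithful forgetful functor $\delta\colon\eq(\delta_0,\delta_1)\to\ms C_0$, using the (strong) monoidal structure isomorphisms of $\delta_0,\delta_1$ only in order to define the new gluing isomorphisms. Write $\mu^i_{a,a'}\colon\delta_i a\otimes\delta_i a'\simto\delta_i(a\otimes a')$ and $\epsilon^i\colon\mathbf 1\simto\delta_i\mathbf 1$ for the structure isomorphisms of $\delta_i$ $(i=0,1)$. The first point is that the formula $(a,\phi)\otimes(a',\phi')=(a\otimes a',\phi\otimes\phi')$ must be read as
\[
(a,\phi)\otimes(a',\phi')=\bigl(a\otimes a',\ \mu^0_{a,a'}\circ(\phi\otimes\phi')\circ(\mu^1_{a,a'})\inv\bigr),
\]
since the literal $\phi\otimes\phi'$ is a map $\delta_1 a\otimes\delta_1 a'\to\delta_0 a\otimes\delta_0 a'$ and must be conjugated by the $\mu^i$ to become an isomorphism $\delta_1(a\otimes a')\to\delta_0(a\otimes a')$; call this conjugate $\widetilde{\phi\otimes\phi'}$. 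It is a composite of isomorphisms, so the right-hand side is a well-defined object of the equalizer. On morphisms I would let $f\otimes f'$ be the tensor product formed in $\ms C_0$; that it lands in $\eq(\delta_0,\delta_1)$, i.e.\ that $f\otimes f'$ intertwines the two conjugated gluing maps, follows from the naturality of $\mu^0$ and $\mu^1$. The unit object is $(\mathbf 1,\epsilon^0\circ(\epsilon^1)\inv)$.

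Next I would produce the associativity and unit constraints. Since $\delta$ is faithful and is the identity on underlying morphisms, it suffices to check that the associator $\mf a_{a,a',a''}$ and the unitors of $\ms C_0$ are \emph{morphisms in the equalizer}; the pentagon and triangle identities in $\eq(\delta_0,\delta_1)$ are then automatic, because they already hold in $\ms C_0$ and $\delta$ reflects such identities. Verifying that $\mf a_{a,a',a''}$ is a morphism of the equalizer is the one genuinely computational step, and I expect it to be the main obstacle: one must show that $\mf a_{a,a',a''}$ commutes with the conjugated gluing maps $\widetilde{(\phi\otimes\phi')\otimes\phi''}$ and $\widetilde{\phi\otimes(\phi'\otimes\phi'')}$. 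This is a diagram chase that feeds in the associativity coherence hexagon for each monoidal functor $\delta_0$ and $\delta_1$ (relating $\mu^i_{a\otimes a',a''}$, $\mu^i_{a,a'\otimes a''}$, $\mu^i_{a,a'}$, $\mu^i_{a',a''}$, and the associators of $\ms C_0$ and $\ms C_1$) together with the naturality of $\mu^i$; the two hexagons, one for $i=0$ and one for $i=1$, are exactly what cancels the structure maps on either side. The unitor checks are the analogous, easier computation using the unit coherence axioms for $\delta_0,\delta_1$.

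With the constraints in place, $\delta\colon\eq(\delta_0,\delta_1)\to\ms C_0$ is strict monoidal: by construction it preserves $\otimes$ and the unit on the nose, and the associators and unitors of $\eq(\delta_0,\delta_1)$ were defined to project to those of $\ms C_0$, so nothing further needs checking. For the canonical natural transformation $\alpha\colon\delta_1\delta\implies\delta_0\delta$, $\alpha(a,\phi)=\phi$, monoidality amounts to two commuting diagrams. The tensor-compatibility square asks that $\mu^0_{a,a'}\circ(\phi\otimes\phi')=\widetilde{\phi\otimes\phi'}\circ\mu^1_{a,a'}$, which holds by the very definition of $\widetilde{\phi\otimes\phi'}$; the unit-compatibility triangle asks that $(\epsilon^0\circ(\epsilon^1)\inv)\circ\epsilon^1=\epsilon^0$, which is immediate. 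Thus $\alpha$ is monoidal, which completes the first assertion of the lemma.

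Finally, for the braided case I would define the braiding of $\eq(\delta_0,\delta_1)$ to have underlying morphism the braiding $c_{a,a'}$ of $\ms C_0$. The only thing to verify is that $c_{a,a'}$ is a morphism of the equalizer, i.e.\ that it intertwines $\widetilde{\phi\otimes\phi'}$ and $\widetilde{\phi'\otimes\phi}$; this follows from the braided monoidal functor axiom for $\delta_0$ and $\delta_1$ (compatibility of $\mu^i$ with the braidings of $\ms C_0$ and $\ms C_1$) together with naturality of $\mu^i$. The hexagon axioms, and in the symmetric case the relation $c_{a',a}\circ c_{a,a'}=\id$, then transfer from $\ms C_0$ to $\eq(\delta_0,\delta_1)$ by faithfulness of $\delta$, exactly as for the pentagon; and $\delta$ is braided monoidal since its underlying braiding maps are those of $\ms C_0$.
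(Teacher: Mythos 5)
Your proposal is correct and follows essentially the same route as the paper: you define the gluing datum on a tensor product by conjugating $\phi\otimes\phi'$ with the structure isomorphisms of $\delta_0,\delta_1$ (the paper's ``$\phi\otimes\psi$'' square), verify that the associator of $\ms C_0$ is a morphism of the equalizer via the two coherence hexagons plus naturality, and let faithfulness of $\delta$ transfer the pentagon, triangle, and braiding axioms. The only (cosmetic) difference is that your unit datum $\epsilon^0\circ(\epsilon^1)\inv$ is written more carefully than the paper's $\epsilon_0^{-1}\epsilon_1^{-1}$, which appears to be a typo there.
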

Said another way, the forgetful (2-)functor from monoidal (braided, resp. 
symmetric) categories to
categories, preserves 2-equalizers.
\newcommand{\xto}{\xrightarrow}
\newcommand{\sto}{\xto{\sim}}

In order to prove this lemma, we will begin by fixing some language
and notation. Following \cite{tensorcategories} when we say that $\ms
C_i$ is abelian monoidal ($i = 0, 1$) we mean that have a 6-tuple
$(\ms C_i, \otimes, \alpha, 1_i, \ell^i, r^i)$ consisting of additive
bifunctors $\otimes: \ms C_i \times \ms C_i \to \ms C_i$,
``associativity constraints,'' which are isomorphisms $\alpha_{a, b,
  c}^i: (a \otimes b) \otimes c \sto a \otimes (b \otimes c)$, natural
in $a, b, c$, unit objects $1_i \in \ms C_i$ with isomorphisms
$\ell_a^i: 1_i \otimes a \to a$, $r_a^i: a \otimes 1_i \to a$ both
natural in $a$, such that for every $a, b, c, d$ objects in $\ms C_i$,
we have a commutative pentagon
\begin{equation} \label{pentagon}
  \xymatrix @C=.1cm{ & & ((a \otimes b) \otimes c) \otimes d
    \ar[rrd]^{\alpha^i_{a \otimes b, c, d}} \ar[lld]_{\alpha^i_{a,b,c}
      \otimes \id_d} \\ (a \otimes (b \otimes c)) \otimes d
    \ar[rd]_{\alpha^i_{a, b\otimes c, d}} & & & & (a \otimes b)
    \otimes (c \otimes d) \ar[dl]^{\alpha^i_{a, b, c \otimes d}} \\ &
    a \otimes ((b \otimes c) \otimes d) \ar[rr]_{\id_a \otimes
      \alpha^i_{b, c, d}} & & a \otimes (b \otimes (c \otimes d)) }
\end{equation}
and such that for every $a, b$ objects in $\ms C$, we have a commutative 
triangle
\begin{equation} \label{unit compatibility}
  \xymatrix{
    (a \otimes 1_i) \otimes b  \ar[rr]^{\alpha^i_{a, 1_i, b}} \ar[rd]_{r^i_a 
\otimes \id_b} & & a \otimes (1_i \otimes b) \ar[ld]^{\id_a \otimes \ell_b^i} \\
    & a \otimes b
}
\end{equation}
In this case, we note \cite[Corollary~2.2.5]{tensorcategories} that we
have $\ell_{1_i}^i = r_{1_i}^i$, which we denote as $\iota_i : 1_i
\otimes 1_i \to 1_i$. To say that the functors $\delta_i: \ms C_0 \to
\ms C_1$ are monoidal is to say that we have specified pairs
$(\delta_i, J^i)$, where $\delta_i$ is an additive functor, and
$J^i_{a, b} : \delta_i(a) \otimes \delta_i(b) \xto \delta_i(a \otimes
b)$ are isomorphisms, natural in $a, b$ such that for each $a, b, c$
we have a commutative hexagon:
\begin{equation} \label{hexagon}
\xymatrix @C=.3cm @R=1cm{ (\delta_i(a) \otimes \delta_i(b)) \otimes
  d_i(c) \ar[rrrrr]^{\alpha_{\delta_i(a), \delta_i(b), \delta_i(c)}}
  \ar[d]_-{J^i_{a, b} \otimes \id_{\delta_i(c)}} & & & & & \delta_i(a)
  \otimes (\delta_i(b) \otimes \delta_i(c)) \ar[d]^-{\id_{\delta_i(a)
      \otimes J^i_{b, c}}} \\ \delta_i(a \otimes b) \otimes
  \delta_i(c) \ar[dr]_{J^i_{a \otimes b, c}} & & & & & \delta_i(a)
  \otimes \delta_i(b \otimes c) \ar[dl]^-{J^i_{a, b \otimes c}} \\ &
  \delta_i((a \otimes b) \otimes c) \ar[rrr]_{\delta_i(\alpha_{a, b,
      c})} & & & \delta_i(a \otimes (b \otimes c)), }
\end{equation}
and as in \cite[Remark~2.4.6]{tensorcategories} we require that there
is an isomorphism $\epsilon_i: 1_1 \to \delta_i(1_0)$ such that for
every object $a$ in $\ms C_0$, we have commutative squares
\doubleequation[left unit morphism, right unit morphism]{\xymatrix{
    1_1 \otimes \delta_i(x) \ar[r]^{\ell^1_{\delta_i(x)}}
    \ar[d]_{\epsilon_i \otimes \id_{\delta_i(x)}} & \delta_i(x)
    \\ \delta_i(1_0) \otimes \delta_i(x) \ar[r]^{J^i_{1_0, x}} &
    \delta_i(1_0 \otimes x) \ar[u]_{\delta_i(\ell_x^i)} }} {
  \xymatrix{ \delta_i(x) \otimes 1_1 \ar[r]^{r^1_{\delta_i(x)}}
    \ar[d]_{\id_{\delta_i(x)} \otimes \epsilon_i} & \delta_i(x)
    \\ \delta_i(x) \otimes \delta_i(1_0) \ar[r]^{J^i_{x, 1_0}} &
    \delta_i(x \otimes 1_0) \ar[u]_{\delta_i(r_x^i)} } } but which we
will simply consider as an identification $1_1 = \delta_i(1_0)$.

We say that $\omega$ is a monoidal natural transformation between
monoidal functors $(f, J), (f', J')$ if it is a natural transormation
from $f$ to $f'$ which commutes with $J, J'$ in the sense that we have
a commutative diagram for every $a, b$:
\[\xymatrix{
f(a) \otimes f(b) \ar[r]^{J_{a, b}} \ar[d]_{\phi(a) \otimes \phi(b)} & f(a 
\otimes b) \ar[d]^{\phi(a \otimes b)} \\
f'(a) \otimes f'(b) \ar[r]_{J'_{a, b}} & f'(a \otimes b)
}\]
\newcommand{\quo}[1]{\text{``}{#1}\text{''}}
\begin{proof}[Proof of Lemma~\ref{monoidality}]
  We define a monoidal structure on $\eq(\delta_0, \delta_1)$ by
  setting $(a, \phi) \otimes (b, \psi) = (a \otimes b, \quo{\phi
    \otimes \psi})$, where $\quo{\phi \otimes \psi}$ is defined via
  the commutative square
  \begin{equation} \label{eqtensor}
      \xymatrix{
      \delta_1(a \otimes b) \ar[d]_{\quo{\phi \otimes \psi}} & \delta_1(a) 
\otimes \delta_1(b) \ar[l]_{J^1_{a, b}} \ar[d]^{\phi \otimes \psi} \\
      \delta_0(a \otimes b)  & \delta_0(a) \otimes \delta_0(b).
      \ar[l]^{J^0_{a \otimes b}}}
  \end{equation}
  We claim that the morphisms
  \[\alpha_{(a, \phi), (b, \psi), (c, \theta)} : (a, \phi) \otimes ((b, \psi) 
\otimes (c, \theta)) \to ((a, \phi) \otimes (b, \psi)) \otimes (c, \theta) \]
  given by the associativity constraint for $\ms C_0$, which we write
  as $\alpha_{a, b, c}: a \otimes (b \otimes c) \to (a \otimes b)
  \otimes c$, constitutes an associativity constraint for
  $\eq(\delta_0, \delta_1)$. Note that this makes sense as a
  definition, because $\delta$ is a faithful functor -- we need only
  check that this is a valid morphism in the equalizer category. By
  definition, we may write
  \begin{align*}
  (a, \phi) \otimes ((b, \psi) \otimes (c, \theta)) &= (a \otimes (b \otimes 
c), \quo{\phi \otimes (\quo{\psi \otimes \theta})}) \\
  ((a, \phi) \otimes (b, \psi)) \otimes (c, \theta) &= ((a \otimes b) \otimes 
c, \quo{(\quo{\phi \otimes \psi}) \otimes \theta}) \\
  \end{align*}
  where $\quo{\phi \otimes (\quo{\psi \otimes \theta})}$ and
  $\quo{(\quo{\phi \otimes \psi}) \otimes \theta}$ are defined by the
  diagrams
  \[
  \xymatrix @C = 1.4cm{ \delta_1(a \otimes (b \otimes c))
    \ar[d]_{\quo{\phi \otimes (\quo{\psi \otimes \theta})}} &
    \delta_1(a) \otimes \delta_1((b \otimes c)) \ar[l]_{J^1_{a, b
        \otimes c}} \ar[d]^{\phi \otimes (\quo{\psi \otimes \theta})}
    & & \delta_1(a) \otimes (\delta_1(b) \otimes \delta_1(c))
    \ar[d]^{\phi \otimes (\psi \otimes \theta)}
    \ar[ll]_{\id_{\delta_1(a)} \otimes J^1_{b, c}} \\ \delta_0(a
    \otimes (b \otimes c)) & \delta_0(a) \otimes \delta_0(b \otimes c)
    \ar[l]^{J^0_{a, b \otimes c}} & & \delta_0(a) \otimes (\delta_0(b)
    \otimes \delta_0(c)) \ar[ll]^{\id_{\delta_0(a)} \otimes J^0_{b,
        c}} }
  \]
  \[
  \xymatrix @C = 1.4cm{
    \delta_1((a \otimes b) \otimes c)
      \ar[d]_{\quo{(\quo{\phi \otimes \psi}) \otimes \theta}}
      &
    \delta_1(a \otimes b) \otimes \delta_1(c)
      \ar[l]_{\delta_0(a \otimes b) \otimes \delta_0(c)}
      \ar[d]^{(\quo{\phi \otimes \psi}) \otimes \theta}
      & &
    (\delta_1(a) \otimes \delta_1(b)) \otimes \delta_1(c)
      \ar[d]^{(\phi \otimes \psi) \otimes \theta}
      \ar[ll]_{ J^1_{a, b} \otimes \id_{\delta_1(c)}}
    \\
    \delta_0((a \otimes b) \otimes c)
      &
    \delta_0(a \otimes b) \otimes \delta_0(c)
      \ar[l]^{J^0_{a \otimes b, c}}
      & &
    (\delta_0(a) \otimes \delta_0(b)) \otimes \delta_0(c)
      \ar[ll]^{J^0_{a, b} \otimes \id_{\delta_0(c)}}
  }
  \]
and we need to check that the proposed associativity constraint
induces a morphism in the equalizer category, which is to say that the
following diagram commutes:
\[\xymatrix @C=2cm{
\delta_1(a \otimes (b \otimes c)) \ar[r]^{\delta_1(\alpha_{a, b, c})}
\ar[d]_{\quo{\phi \otimes (\quo{\psi \otimes \theta})}} & \delta_1((a
\otimes b) \otimes c) \ar[d]^{\quo{(\quo{\phi \otimes \psi}) \otimes
    \theta}} \\ \delta_0(a \otimes (b \otimes c))
\ar[r]_{\delta_0(\alpha_{a, b, c})} & \delta_0((a \otimes b) \otimes
c) }\] But expanding out the vertical arrows in this diagram, this is
equivalent to observing that the following diagram commutes:
\[\xymatrix @C = 3cm{
  \delta_1(a \otimes (b \otimes c))
    \ar[r]^{\delta_1(\alpha_{a, b, c})}
    \ar@/_10pc/[ddddd]^{\quo{\phi \otimes (\quo{\psi \otimes \theta})}}
    &
  \delta_1((a \otimes b) \otimes c)
    \ar@/^10pc/[ddddd]_{\quo{(\quo{\phi \otimes \psi}) \otimes \theta}}
  \\
  \delta_1(a) \otimes (\delta_1(b \otimes c))
    \ar[u]^{J^1_{a, b \otimes c}}
    &
  \delta_1(a \otimes b) \otimes \delta_1(c)
    \ar[u]_{J^1_{a \otimes b, c}}
  \\
  \delta_1(a) \otimes (\delta_1(b) \otimes \delta_1(c))
    \ar[u]^{\id_{\delta_1(a)} \otimes J^1_{b, c}}
    \ar[d]_{\phi \otimes (\psi \otimes \theta)}
    \ar[r]^{\alpha_{\delta_1(a), \delta_1(b), \delta_1(c)}}
    &
  (\delta_1(a) \otimes \delta_1(b)) \otimes \delta_1(c)
    \ar[u]_{J^1_{a, b} \otimes \id_{\delta_1(c)}}
    \ar[d]^{(\phi \otimes \psi) \otimes \theta}
  \\
  \delta_0(a) \otimes (\delta_0(b) \otimes \delta_0(c))
    \ar[d]_{\id_{\delta_0(a)} \otimes J^0_{b, c}}
    \ar[r]_{\alpha_{\delta_0(a), \delta_0(b), \delta_0(c)}}
    &
  (\delta_0(a) \otimes \delta_0(b)) \otimes \delta_0(c)
    \ar[d]^{J^0_{a, b} \otimes \id_{\delta_0(c)}}
  \\
  \delta_0(a) \otimes \delta_0(b \otimes c)
    \ar[d]_{J^0_{a, b \otimes c}}
    &
  \delta_0(a \otimes b) \otimes \delta_0(c)
    \ar[d]^{J^0_{a \otimes b, c}}
  \\
  \delta_0(a \otimes (b \otimes c))
    \ar[r]_{\delta_0(\alpha_{a, b, c})}
    & \delta_0((a \otimes b) \otimes c)}\] But this diagram commutes since the
  top and bottom portions are the compatibility hexagons of
  Diagram~\ref{hexagon} for the monoidal functors $(\delta_1, J^1)$ and
  $(\delta_0, J^0)$ respectively, and the middle square commutes due to the
  naturality of $\alpha$ in its three variables.

It follows that $\eq(\delta_0, \delta_1)$ has a monoidal structure (note that by
convention with units, we are implicitly identifying $1 \in \eq(\delta_0,
\delta_1)$ with $(1_0, \epsilon_0^{-1} \epsilon_1^{-1})$). The fact that the
associativity constraint satisfies the pentagon condition of
diagram~\ref{pentagon} follows from the fact that $\delta$ is faithful, and it
similarly follows that the left and right unit morphisms satisfy the condition
of diagram~\ref{unit compatibility}.

We may extend $\delta: \eq(\delta_0, \delta_1) \to \ms C_0$ to a monoidal
functor $(\delta, J)$, by defining $J_{a, b}: \delta(a, \phi) \otimes (b, \psi)
= a \otimes b \to \delta((a, \phi) \otimes (b, \psi)) = a \otimes b$ to be the
identity morphism (and $\epsilon: 1_0 \to \delta(1_0, \epsilon_0^{-1}
\epsilon_1^{-1}) = 1_0$ to also be the identity morphism). The fact that the
compatibility hexagon ensuring that this defines a morphism holds (similarly to
diagram~\ref{hexagon}) is immediate. Finally, the fact that the canonical
natural transformation $\delta_1 \delta \to \delta_2 \delta$ is monoidal follows
from the definition of the monoidal structure as in diagram~\ref{eqtensor}.

If in addition, the categories $\ms C_i$ are given a braiding defined by a
natural isomoprhism $C^i_{a, b} : a \otimes b \to b \otimes a$, then one can
check that $C^0$ induces isomorphisms
\[C_{(a, \phi), (b, \psi)} : (a, \phi) \otimes (b, \psi) \to (b, \psi) \otimes
(a, \phi) \] provided that $\delta_0, \delta_1$ are braided morphisms (see
\cite[Section~8.1]{tensorcategories} for precise definitions). Consequently, it
again follows quickly from the fact that $\delta$ is faithful that $\delta$ is a
morphism of braided monoidal categories. Again, by faithfulness, it is easy to
see that if the braiding $C^0$ on $\ms C_0$ is symmetric, i.e. $C^0_{b, a}
C^0_{a, b} = \id_{a \otimes b}$, then so is the braiding $C$ on $\eq(\delta_0,
\delta_1)$.
\end{proof}

\newcommand{\ctcat}[1]{{#1}_\otimes^{\text{r-Ex}}}
\newcommand{\cohtensorhom}[2]{\rHom(\Coh\left({#1}), \Coh({#2})\right)}
\newcommand{\cthom}[2]{\left[\Coh(#1), \Coh({#2})\right]_{\otimes}}

Suppose we are given symmetric monoidal categories $\cA, \cB$.
Let $\rHom(\cA,\cB)$ denote the category whose objects are right exact 
symmetric monoidal functors and whose morphisms are natural isomorphisms.

\begin{lem}
Let $\cA_0, \cA_1$ be abelian, symmetric monoidal categories. Suppose we are 
given functors $d^0, d^1 \in \rHom(\cA_0, \cA_1)$. Let $\cA$ be the 2-equalizer 
of these functors. Then the natural functor $\cA \to \cA_0$ is in $\rHom(\cA, 
\cA_0)$.
\end{lem}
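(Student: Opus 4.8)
The plan is to reduce the statement to a single missing ingredient — right exactness — since the symmetric monoidal structure is already in hand. Indeed, because $d^0,d^1$ are symmetric monoidal, Lemma~\ref{monoidality} equips $\cA=\eq(d^0,d^1)$ with a symmetric monoidal structure for which the natural functor $\delta:\cA\to\cA_0$ is symmetric monoidal; and since $d^0,d^1$ are in particular additive, Lemma~\ref{exact2eq}(1) tells us that $\delta$ is additive and faithful, so $\cA$ is additive. Thus the only thing left is to show that $\cA$ has cokernels (hence, being additive, all finite colimits) and that $\delta$ preserves them, which is exactly what it means for $\delta$ to be right exact here.

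The key step is to imitate the \emph{cokernel half} of the proof of Lemma~\ref{exact2eq}, noting that right exactness of $d^0,d^1$ — rather than full exactness — is precisely what is needed to push cokernels through. Given a morphism $f:(a',\phi')\to(a,\phi)$ in $\cA$, I would first form the cokernel $\pi:a\to c$ of $\delta f$ in $\cA_0$. Since $d^0,d^1$ are right exact, $d^i\pi:d^i a\to d^i c$ is the cokernel of $d^i(\delta f)$ for $i=0,1$; in particular each $d^i\pi$ is an epimorphism. Using that $f$ is a morphism of $\cA$, one checks $d^0\pi\circ\phi\circ d^1 f=d^0(\pi f)\circ\phi'=0$, so by the universal property of the cokernel $d^1\pi$ there is a unique map $\psi_c:d^1 c\to d^0 c$ with $\psi_c\circ d^1\pi=d^0\pi\circ\phi$, and the same argument applied to $\phi^{-1}$ shows $\psi_c$ is an isomorphism. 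Then $(c,\psi_c)$ is an object of $\cA$ and $\pi$ lifts to a morphism $(a,\phi)\to(c,\psi_c)$ in $\cA$.

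Next I would verify that $(c,\psi_c)$ with $\pi$ is genuinely the cokernel of $f$ in $\cA$. Given any $g:(a,\phi)\to(b,\chi)$ in $\cA$ with $g\circ f=0$, the cokernel property in $\cA_0$ yields a unique $\bar g:c\to b$ with $\bar g\circ\pi=\delta g$. To see that $\bar g$ is a morphism in $\cA$, i.e. that $\chi\circ d^1\bar g=d^0\bar g\circ\psi_c$, I would precompose both sides with the epimorphism $d^1\pi$: both sides then reduce to $\chi\circ d^1 g$ and $d^0 g\circ\phi$ respectively, which agree because $g$ is already a morphism of $\cA$, and $d^1\pi$ may be cancelled. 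Faithfulness of $\delta$ gives uniqueness of the lift $\bar g$ in $\cA$. Hence $\delta$ sends cokernels in $\cA$ to cokernels in $\cA_0$; since $\delta$ is additive it also preserves the zero object and finite coproducts, so it preserves all finite colimits and is right exact. Together with the first paragraph this gives $\delta\in\rHom(\cA,\cA_0)$.

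The main obstacle is the construction of the gluing isomorphism $\psi_c$ together with the universal-property check, carried out with only right exactness available. In Lemma~\ref{exact2eq} both the kernel and the cokernel were lifted to $\cA$ using the full exactness of $d^0,d^1$ and a five-lemma argument; here only cokernels survive, which is exactly why the conclusion is right exactness rather than exactness. The technical heart is therefore the epimorphism-cancellation argument using that $d^i\pi$ is the cokernel projection, which replaces the five-lemma bookkeeping and makes both $\psi_c$ and the induced cokernel map well defined.
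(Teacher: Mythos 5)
Your proof is correct and follows essentially the same route as the paper, which simply declares the statement ``an immediate consequence of Lemma~\ref{exact2eq}'' (together, implicitly, with Lemma~\ref{monoidality} for the symmetric monoidal part). In fact you are more careful than the paper: Lemma~\ref{exact2eq} as stated assumes $d^0,d^1$ are exact rather than merely right exact, and your reworking of its cokernel half — constructing $\psi_c$ via the universal property of $\operatorname{coker}(d^1 f)$ and cancelling the epimorphism $d^i\pi$ in place of the four-row diagram argument — is exactly the adaptation needed to make the citation legitimate.
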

\begin{proof}
This is an immediate consequence of Lemma~\ref{exact2eq}.
\end{proof}
\newcommand{\reasmc}{\mathrm{rExAbSMon}}
\newcommand{\ab}{\mathrm{Ab}}
Let $\reasmc$ denote the $2$-category of (small) abelian symmetric monoidal 
categories with morphisms being right exact symmetric monoidal functors and 
$2$-morphisms being natural isomorphisms of functors. Let $\ab$ the 
$2$-category of (small) abelian categories. As a corollary to the above, we 
have the following.
\begin{cor}
Suppose that $d^0, d^1 \in \rHom(\cA, \cB)$. Then the 2-equalizer in $\ab$ of 
these functors is naturally a symmetric monoidal category, and coincides with 
the 2-equalizer in $\reasmc$.
\end{cor}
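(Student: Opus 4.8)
The plan is to realize both 2-equalizers on the single underlying category $\eq(d^0,d^1)$ and then match up three layers of universal property — plain categorical, right exact, and symmetric monoidal — each of which has already been set up. By Lemma~\ref{2-cat limit}, the concrete category $\eq(d^0,d^1)$ is the 2-equalizer of $d^0,d^1$ in $\Cat$, and by Lemma~\ref{monoidality} in its symmetric form this category carries a canonical symmetric monoidal structure for which the projection $\delta:\eq(d^0,d^1)\to\cA$ extends to a symmetric monoidal functor and the canonical 2-isomorphism $d^1\delta\Rightarrow d^0\delta$ is monoidal; by the preceding lemma $\eq(d^0,d^1)$ is moreover an abelian symmetric monoidal category with $\delta$ right exact symmetric monoidal. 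This already yields the first assertion, that the $\ab$-2-equalizer — which the next step confirms is this same category — is naturally symmetric monoidal.

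First I would pin down the $\ab$ universal property. For an abelian category $\ms B$, Lemma~\ref{2-cat limit} supplies an isomorphism of categories between $\mathrm{Fun}(\ms B,\eq(d^0,d^1))$ and the equalizer category $\eq(\mathrm{Fun}(\ms B,\cA),\mathrm{Fun}(\ms B,\cB))$ of the two induced functors; concretely, a functor $h:\ms B\to\eq(d^0,d^1)$ corresponds to a pair $(F_0,\beta)$ with $F_0=\delta h:\ms B\to\cA$ and $\beta:d^1F_0\Rightarrow d^0F_0$ a natural isomorphism. To promote this to the $\ab$-statement I would observe that $\delta$ \emph{reflects} right exactness: cokernels in $\eq(d^0,d^1)$ are formed in $\cA$, exactly as in the cokernel half of the argument for Lemma~\ref{exact2eq}, so $F_0=\delta h$ is right exact if and only if $h$ is. Restricting the above isomorphism to right exact functors and natural isomorphisms then gives precisely the universal property of the 2-equalizer in $\ab$, so the $\ab$-2-equalizer is (canonically isomorphic to) $\eq(d^0,d^1)$.

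Then I would run the same bookkeeping one level up. By Lemma~\ref{monoidality}, the forgetful 2-functor from symmetric monoidal categories to $\Cat$ preserves 2-equalizers, so a symmetric monoidal functor $\ms B\to\eq(d^0,d^1)$ is the same datum as a symmetric monoidal $F_0:\ms B\to\cA$ together with a \emph{monoidal} natural isomorphism $\beta:d^1F_0\Rightarrow d^0F_0$. Combining this with the right-exactness reflection of the previous paragraph, right exact symmetric monoidal functors $\ms B\to\eq(d^0,d^1)$ correspond bijectively, compatibly with 2-morphisms, to right exact symmetric monoidal $F_0:\ms B\to\cA$ equipped with such a $\beta$ — which is exactly the universal property defining the 2-equalizer in $\reasmc$. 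Since this property is witnessed on the same category $\eq(d^0,d^1)$ with the same projection $\delta$ and the same monoidal structure as in the $\ab$ computation, the two 2-equalizers coincide.

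The only step carrying genuine content, as opposed to assembling already-proved universal properties, is the reflection of right exactness by $\delta$, i.e. the fact that cokernels in $\eq(d^0,d^1)$ are computed in $\cA$ under the mere right exactness of $d^0,d^1$: Lemma~\ref{exact2eq} is stated for exact functors, but its cokernel computation goes through verbatim here and is all that is needed. Everything else is the observation that the three universal properties — categorical (Lemma~\ref{2-cat limit}), monoidal (Lemma~\ref{monoidality}), and right exact (the preceding lemma) — are layered compatibly over the single object $\eq(d^0,d^1)$.
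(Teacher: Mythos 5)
Your proof is correct and follows the route the paper intends: the paper gives no proof of this corollary at all, presenting it as an immediate assembly of Lemma~\ref{2-cat limit}, Lemma~\ref{monoidality}, Lemma~\ref{exact2eq} and the preceding lemma, which is exactly what you carry out. Your one substantive addition --- that under mere right exactness only the cokernel half of the argument for Lemma~\ref{exact2eq} is available, and that this suffices for $\delta$ to reflect right exactness --- correctly isolates and handles the only point with genuine content.
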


\subsection{Patching with respect to a fibered category}

\begin{defn}
Let $F \to C$ be a fibered category, and suppose that we have a diagram of 
morphisms $U_\bullet$ in $C$:
\[U_\bullet = \left[\xymatrix{
U_1 \ar@<-.5ex>[r]_{d_1} \ar@<.5ex>[r]^{d_0} & U_0
}\right]\]
Choosing pullbacks $d_i^*$ gives us functors (well defined up to natural 
equivalence)
\[
\xymatrix{
F(U_0) \ar@<-.5ex>[r]_{d_1^*} \ar@<.5ex>[r]^{d_0^*} & F(U_1)
}
\]
and we define $F(U_\bullet)$ to be the 2-equalizer of this diagram.
\end{defn}
\begin{notn}
If $U_\bullet$ is a diagram as above, we write $d: U_\bullet \to U$ for an
object $U$ to mean a morphism $d: U_0 \to U$, such that $dd_1 = dd_0$.
We will refer to such a diagram as a \emph{patching context}.
\end{notn}

\begin{defn}
Given $d: U_\bullet \to U$, we have a functor (well defined up to natural 
isomorphism)
\[F(U) \to F(U_\bullet)\]
given by taking an object $a$ of $F(U)$ to $(d^* a, d_1^*d^*a 
\overset{\iota}\to d_0^*d^*a)$, where $\iota$ is the unique morphism of $F$ 
such that we have a commutative diagram:
\[\xymatrix{
d_1^* d^* a \ar[rr] \ar[dr]_{\cong} \ar[dd]_{\exists{!}\cong} & & d^*a \ar[dr] 
\\
& (d d_1)^* a \ar[rr] & & a \\
d_0^* d^* a \ar[rr] \ar[ur]^{\cong} & & d^* a, \ar[ru]
}\]
we say that \emph{patching holds for $F$ with respect to the patching
context $U_\bullet \to U$\/} if $F(U) \to F(U_\bullet)$ is an equivalence of 
categories.
\end{defn}

In particular, we can regard a presheaf over $C$ as a fibered category where 
each $F(a)$ is a set for $a \in \ob(C)$ (i.e. no nontrivial morhisms). In this 
case, we note that patching holding for $U_\bullet \to U$ is the statement that
$F(U) \to F(U_0)$ is the equalizer of $\delta_0^*, \delta_1^*: F(U_0) \to 
F(U_1)$.

\section{Beyond coherent sheaves}

In this section, we will show that if patching holds for coherent sheaves on a 
system of locally excellent algebraic stacks, then patching also holds for 
morphisms to any sufficiently nice algebraic stack.
\begin{thm} \label{tannaka main}
Let $\cX, \cX_1, \cX_2$ be locally excellent algebraic stacks, and suppose that 
we have morphisms $\cX_\bullet = \left[
\xymatrix{
\cX_1 \ar@<-.5ex>[r]_{d_1} \ar@<.5ex>[r]^{d_0} & \cX_2
}\right]$ and $\cX_\bullet \to \cX$. Suppose further that patching holds for 
coherent sheaves with respect to $\cX_\bullet \to \cX$. Let $\cG$ be a 
Noetherian algebraic stack with affine stabilizers. Then patching also holds 
for $Hom(\_, \cG)$ with respect to $\cX_\bullet \to \cX$.
\end{thm}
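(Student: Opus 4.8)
The plan is to transport the patching problem for $\Hom(\_,\cG)$ across coherent Tannaka duality into the patching problem for coherent sheaves, which is exactly the hypothesis. The crucial external input is the coherent form of Tannaka duality (in the style of Hall--Rydh and Lurie): for a locally excellent algebraic stack $\cY$ and a Noetherian algebraic stack $\cG$ with affine stabilizers, there is an equivalence of categories
\[ \Hom(\cY, \cG) \simto \rHom(\Coh(\cG), \Coh(\cY)) \]
sending a morphism $g : \cY \to \cG$ to its pullback $g^* : \Coh(\cG) \to \Coh(\cY)$. The key point I would emphasize is that this equivalence is natural in the source $\cY$: a morphism $f : \cY' \to \cY$ acts on the left by $g \mapsto g f$ and on the right by postcomposition with the $\reasmc$-morphism $f^* : \Coh(\cY) \to \Coh(\cY')$, and these agree because $(gf)^* = f^* g^*$.

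First I would apply this to each of $\cX, \cX_1, \cX_2$. By the naturality just described, the patching context $\cX_\bullet \to \cX$ corresponds to the pullback functors $d_0^*, d_1^* : \Coh(\cX_2) \to \Coh(\cX_1)$ together with $d^* : \Coh(\cX) \to \Coh(\cX_2)$, all lying in $\reasmc$. The hypothesis that patching holds for coherent sheaves says precisely that the comparison functor $\Coh(\cX) \to \eq(d_0^*, d_1^*) = \Coh(\cX_\bullet)$ is an equivalence of categories. Since this functor is built from the pullback $d^*$, it is right exact and symmetric monoidal (Lemmas~\ref{monoidality} and~\ref{exact2eq}), hence an equivalence in $\reasmc$; and by the corollary of the previous subsection, $\Coh(\cX_\bullet)$ is simultaneously the 2-equalizer of $d_0^*, d_1^*$ in $\ab$ and in $\reasmc$.

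Next I would invoke Lemma~\ref{2-equalizer represents} in the 2-category $\reasmc$, taking $B = \Coh(\cG)$ and the 2-equalizer $\Coh(\cX_\bullet)$. This yields an isomorphism of categories
\[ \rHom(\Coh(\cG), \Coh(\cX_\bullet)) \simto \eq\bigl(\rHom(\Coh(\cG), \Coh(\cX_2)),\ \rHom(\Coh(\cG), \Coh(\cX_1))\bigr), \]
where the two functors on the right are postcomposition with $d_0^*$ and $d_1^*$. Stringing together all of the above equivalences gives
\[ \Hom(\cX, \cG) \simeq \rHom(\Coh(\cG), \Coh(\cX)) \simeq \rHom(\Coh(\cG), \Coh(\cX_\bullet)) \simeq \eq\bigl(\Hom(\cX_2, \cG) \rightrightarrows \Hom(\cX_1, \cG)\bigr) = \Hom(\cX_\bullet, \cG), \]
where the penultimate equivalence uses Tannaka naturality to identify postcomposition with $d_i^*$ on coherent sheaves with precomposition by $d_i$ on $\Hom(\_,\cG)$.

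The compatibility of this composite with the patching comparison functor of the theorem follows by tracing the naturality of Tannaka duality on objects, so that the composite is exactly the map $\Hom(\cX,\cG) \to \Hom(\cX_\bullet,\cG)$; this is a diagram chase rather than a genuine difficulty. The main obstacle is securing the functorial coherent Tannaka duality in the required generality: one needs the $\Coh$-valued statement, with right exact symmetric monoidal functors matching the definition of $\rHom$, to apply to locally excellent stacks with a Noetherian target having affine stabilizers, and one needs its naturality in the source so that pullback of coherent sheaves corresponds to composition of morphisms into $\cG$. Once that input is in place, the categorical machinery developed above---especially Lemma~\ref{2-equalizer represents} and the identification of the 2-equalizer in $\ab$ with that in $\reasmc$---does the rest.
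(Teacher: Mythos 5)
Your proposal is correct and follows essentially the same route as the paper: apply the Hall--Rydh coherent Tannaka duality to each of $\cX$, $\cX_1$, $\cX_2$, use the hypothesis to replace $\Coh(\cX)$ by $\Coh(\cX_\bullet)$, and then identify $\rHom(\Coh(\cG),\Coh(\cX_\bullet))$ with the equalizer of the two postcomposition functors via Lemma~\ref{2-equalizer represents}. You are somewhat more explicit than the paper about the naturality of the Tannakian equivalence in the source and about the agreement of the $2$-equalizer in $\ab$ with that in $\reasmc$, but these are exactly the points the paper's surrounding lemmas and corollary are designed to supply.
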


\begin{cor}
Suppose we are given $\cX_\bullet \to \cX$ as in Theorem~\ref{tannaka main} for 
which patching holds for coherent sheaves. Then for $G$ an affine group scheme 
over $\cX$, we have a $6$-term exact sequence:
\[\xymatrix{
0 \ar[r] & H^0(\cX,G) \ar[r] & H^0(\cX_0,G) \ar[r] & H^0(\cX_1
,G) \ar@<-2pt> `d[l]
`[lld] [lld] \\
& H^1(\cX, G) \ar[r] & H^1(\cX_0,G) \ar@<.5ex>[r]^{d_0^*}
\ar@<-.5ex>[r]_{d_1^*} & H^1(\cX_1,G)
}\]
where the map $H^0(\cX_0, G) \to H^0(\cX_1, G)$ is given by $g \mapsto d_1^*(g) 
d_0^*(g^{-1})$. In the case that $G$ is abelian, this can be interpreted as an 
exact sequence of abelian groups:
\[\xymatrix{
0 \ar[r] & H^0(\cX,A) \ar[r] & H^0(\cX_0,A) \ar[r] & H^0(\cX_1
,A) \ar@<-2pt> `d[l]
`[lld] [lld] \\
& H^1(\cX, A) \ar[r] & H^1(\cX_0,A) \ar[r] & H^1(\cX_1,A)
}\]
\end{cor}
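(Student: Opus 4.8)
The plan is to apply Theorem~\ref{tannaka main} with $\cG = BG$, the classifying stack of $G$. Since $G$ is an affine group scheme over $\cX$, the stabilizers of $BG$ are copies of $G$ and hence affine, so—granting the finite-type hypotheses that make $BG$ a Noetherian algebraic stack—the theorem applies. For any $Y$ over $\cX$ the groupoid $\Hom(Y, BG)$ is the groupoid $\cT(Y)$ of $G$-torsors on $Y$, with $\pi_0\cT(Y) = H^1(Y,G)$ and $\Aut_{\cT(Y)}(\ast) = G(Y) = H^0(Y,G)$ (here $\ast$ denotes the trivial torsor). Patching for coherent sheaves, via Theorem~\ref{tannaka main}, then supplies an equivalence of groupoids $F: \cT(\cX) \simto E$, where $E = \eq(d_0^*, d_1^*)$ is the $2$-equalizer of the two pullback functors $d_0^*, d_1^* : \cT(\cX_0) \to \cT(\cX_1)$. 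The whole corollary will be extracted from this single equivalence by analyzing $\pi_0$ and the automorphism groups of $E$; conceptually this is just the homotopy long exact sequence of the equalizer groupoid, written out in the language of non-abelian cohomology.

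First I would record the structure of $E$: an object is a pair $(P, \phi)$ with $P$ a $G$-torsor on $\cX_0$ and $\phi: d_1^* P \simto d_0^* P$, and a morphism $(P,\phi)\to(P',\phi')$ is $f: P \to P'$ with $\phi'\circ d_1^* f = d_0^* f\circ\phi$. Under $F$ the trivial torsor goes to $(\ast,\id)$ (the canonical descent datum), and the forgetful functor $E\to\cT(\cX_0)$, $(P,\phi)\mapsto P$, precomposed with $F$ is the pullback $d^*$. Computing automorphisms at the base point gives $\Aut_E(\ast,\id) = \{g\in G(\cX_0): d_0^* g = d_1^* g\}$, the equalizer of the two maps $G(\cX_0)\rightrightarrows G(\cX_1)$. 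Since $F$ is an equivalence it identifies this group with $\Aut_{\cT(\cX)}(\ast) = H^0(\cX,G)$ via $d^*$; this identification is exactly the assertion that $H^0(\cX,G)\to H^0(\cX_0,G)$ is injective with image the kernel of $g\mapsto d_1^*(g)\,d_0^*(g)^{-1}$, which is exactness at $H^0(\cX,G)$ and at $H^0(\cX_0,G)$. (Equivalently this is patching for $\Hom(\_,G)$, which the theorem also gives directly.)

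Next I would analyze $\pi_0$. The equivalence gives $H^1(\cX,G) = \pi_0\cT(\cX)\cong\pi_0 E$. The forgetful functor induces a map $\pi_0 E\to H^1(\cX_0,G)$ which, after $F$, is $d^*$; its image is the set of classes $[P]$ admitting some $\phi: d_1^* P\cong d_0^* P$, i.e. those with $d_0^*[P]=d_1^*[P]$, which is precisely the equalizer of $d_0^*,d_1^*$ on $H^1$—this is exactness at $H^1(\cX_0,G)$. The fiber of the forgetful functor over the trivial class is the action groupoid for $G(\cX_0)$ acting on $G(\cX_1)=\Aut(\ast)$ by $g\cdot\phi = d_1^*(g)\,\phi\,d_0^*(g)^{-1}$ (the exact form depending on the conventions identifying $\Aut(\ast)$ with $G(\cX_1)$ and fixing the direction of $\phi$); its objects are the pairs $(\ast,\phi)$, and the connecting map $H^0(\cX_1,G)\to H^1(\cX,G)$ sends $\phi$ to the class of $(\ast,\phi)$. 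Unwinding the morphism condition, $(\ast,\phi)\cong(\ast,\id)$ exactly when $\phi$ lies in the image of $g\mapsto d_1^*(g)\,d_0^*(g)^{-1}$, which is exactness at $H^0(\cX_1,G)$; and a class in $H^1(\cX,G)$ dies in $H^1(\cX_0,G)$ precisely when its image in $E$ has trivial underlying torsor, i.e. lies in the image of the connecting map, which is exactness at $H^1(\cX,G)$.

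Finally, for the abelian case $G=A$: each $H^1(\_,A)$ is an abelian group under contracted product, every map above is a homomorphism, and the two pullbacks $d_0^*, d_1^*$ may be replaced by their difference $d_0^*-d_1^*$, so that the equalizer becomes $\ker(d_0^*-d_1^*)$ and the map $g\mapsto d_1^*(g)d_0^*(g)^{-1}$ becomes $g\mapsto d_1^*(g)-d_0^*(g)$; the five exactness statements then read off as ordinary exactness of a six-term complex of abelian groups. The main obstacle I anticipate is not conceptual but bookkeeping: carrying out the base-point-dependent, non-abelian translation cleanly—in particular checking that the connecting map is well defined and that the fiber of the forgetful functor over the trivial class really is the asserted action groupoid, so that its orbit set produces the two middle exactness statements with the conventions matching the displayed formula. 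A secondary point to pin down is the hypothesis check that $BG$ is a Noetherian algebraic stack with affine stabilizers, which is where the (implicit) finite-type assumption on $G$ enters.
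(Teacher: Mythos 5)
Your proposal is correct and follows essentially the same route as the paper: apply Theorem~\ref{tannaka main} to $G$ (for the top row) and to $BG$ (for the bottom row), identify $BG(\cX)$ with the $2$-equalizer groupoid of pairs $(P,\phi)$, define the connecting map by sending $g$ to the class of $(\ast, g)$, and read off the five exactness statements. Your version is somewhat more explicit than the paper's (e.g.\ in verifying exactness at $H^0(\cX_1,G)$ via the action groupoid description of the fiber over the trivial torsor), but the underlying argument is the same.
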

\begin{proof}
Exactness of the top row is exactly the statement of Theorem~\ref{tannaka main} 
in the case $\cG$ is the scheme $G$. We define the connecting map $H^0(\cX_1, 
G)$ as follows.
For $g \in H^0(\cX_1, G) = G(\cX_1)$, consider $G$-torsor on $\cX_\bullet$ 
(i.e. the object of $BG(\cX_\bullet)$) described by $(1, g)$, where $1$ denotes 
the trivial $G$-torsor on $\cX_0$, and $g$ is considered as an automorphism of 
the trivial $G$-torsor on $\cX_1$.
By Theorem~\ref{tannaka main}, this gives a $G$-torsor on $\cX$, well 
defined up to isomorphism, and hence a class of $H^1(\cX, G)$, which by 
construction is trivial when restricted to $\cX_0$. This is the definition of 
the connecting map, and why it maps into the pointed kernel. To see that it is 
the entire pointed kernel, if $P$ is a $G$-torsor on $\cX$ which is trivial 
over $\cX_0$, then Theorem \ref{tannaka main} implies that $BG(\cX) \cong BG(\cX_\bullet)$, and we have that the image of $P$ 
in $BG(\cX_\bullet)$ must have the form $(1, g)$ as above.

Finally, it is fairly direct to see that the image of $H^1(\cX, G) \to 
H^1(\cX_0, G)$ lies in the equalizer. Conversely, suppose that $[P] \in 
H^1(\cX_0, G)$ is in the equalizer of the map to $H^1(\cX_1, G)$. In this case, 
we may find some isomorphism $\phi: d_1^*P \to d_0^*P$. But in this case, it is 
clear that the $G$-torsor $\til P$ which must exist and map to the object $(P, 
\phi)$, showing exactness of the bottom row.
\end{proof}

Before giving the proof of Theorem~\ref{tannaka main}, let us set up a bit of 
notation.
Let $\cG, \cX$ be locally Noetherian algebraic stacks and
let $\Coh(\cG)$ and $\Coh(\cX)$ denote their categories of coherent sheaves.
Note that these are symmetric monoidal categories with respect to the tensor 
product.
A morphism $f \colon \cX \rightarrow \cG$ induces a functor
\begin{align*}
  f^* \colon \Coh(\cG) \rightarrow \Coh(\cX)
\end{align*}

Taking a map of algebraic stacks $f: \cX \to \cG$ to $f^*$, we obtain a functor
\begin{align*}
   \Hom\left( \cX,\cG \right) \rightarrow \rHom (\Coh(\cG),\Coh(\cX)).
\end{align*}
via pullback.

For a fixed algebraic stack $\cG$, we define a fibered category $\ctcat{\cG}$ 
over the category of algebraic stacks as follows. The objects of $\ctcat{\cG}$ 
are pairs $(\cX, F)$ where $\cX$ is an algebraic stack and $F$ is an object of 
$\cohtensorhom{\cG}{\cX}$. A morphism $(\cX', F') \to (\cX, F)$ consists of a 
morphism $f: \cX' \to \cX$ together with a morphism $F' \to f^* F$.
The association $f \mapsto f^*$ gives a morphism of fibered categories $\cG \to 
\ctcat{\cG}$, where we write $\cG$ for the representable fibered category it 
defines.

In \cite{HallRydh}, Hall and Rydh prove that this morphism is an equivalence 
under certain
conditions:

\begin{thm}[Theorem 1.1 in \cite{HallRydh}]
  \label{tannaka}
  Let $\cG$ be a Noetherian algebraic stack with affine stabilizers.
  Then, for every locally excellent algebraic stack $\cX$, the natural functor
  \begin{align*}
    \cG(\cX) = \Hom\left( \cX,\cG \right) \rightarrow \rHom 
(\Coh(\cG),\Coh(\cX)) = \ctcat{\cG}(\cX)
  \end{align*}
  is an equivalence.
\end{thm}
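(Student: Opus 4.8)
The plan is to prove the functor is an equivalence by establishing full faithfulness and essential surjectivity separately, after first cutting the problem down along both variables. On the source $\cX$, I would reduce to the case $\cX = \Spec A$ affine. Both assignments $\cX \mapsto \Hom(\cX,\cG)$ and $\cX \mapsto \ctcat{\cG}(\cX) = \rHom(\Coh(\cG),\Coh(\cX))$ are stacks for the smooth (indeed fppf) topology: the first because $\cG$ is a stack, and the second because $\Coh$ satisfies descent and, by Lemma~\ref{monoidality} together with Lemma~\ref{exact2eq}, the formation of the category of right-exact symmetric monoidal functors is compatible with the gluing $2$-equalizers produced by a cover. Since the comparison is a morphism of stacks over $\cX$, being an equivalence can be checked smooth-locally, so we may take $\cX = \Spec A$. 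On the target $\cG$, Noetherian approximation lets me reduce to $\cG$ of finite type over $\Spec \mathbb{Z}$, keeping all excellence and finiteness hypotheses in force; the assumption that $\cX$ is locally excellent is exactly what makes the later completion and approximation arguments well behaved.

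Full faithfulness I expect to be the formal half. Given $f, g \colon \Spec A \to \cG$, I would recover $f$ from its pullback functor by testing against the universal coherent sheaves on $\cG$: the symmetric monoidal functor $f^*$ determines the pullback of $\cO_{\cG}$ together with its algebra structure and the images of a generating family of coherent sheaves, and because $\cG$ has affine stabilizers (hence affine diagonal after passing to an atlas), a morphism into $\cG$ is pinned down by this data along a chosen presentation $U \to \cG$. A monoidal natural isomorphism $f^* \cong g^*$ therefore forces $f$ and $g$ to agree on the induced ringed-topos data, and the monoidal constraint promotes the agreement to a canonical $2$-isomorphism $f \Rightarrow g$; uniqueness yields faithfulness and the construction yields fullness.

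Essential surjectivity is the heart of the matter. Starting from a right-exact symmetric monoidal $F \colon \Coh(\cG) \to \Coh(\Spec A)$, I would build $\Spec A \to \cG$ in increasing generality. When $\cG = \Spec B$ is affine, $F$ is forced to be base change along the ring map $B = \End_{\Coh(\cG)}(\cO_{\cG}) \to \End_A(F\cO_{\cG}) = A$ read off from monoidality and right exactness, which is classical module-level Tannaka; the quasi-affine case follows by recording which sheaves become invertible. For general $\cG$ with affine stabilizers I would pass to $A$ complete local with residue field $k$: restricting $F$ along $\Coh(\Spec A) \to \Coh(\Spec k)$ gives a fiber-type functor out of $\Coh(\cG)$, hence (by Tannaka over a field, realizing the residual gerbe) a map $\Spec k \to \cG$; then, using $F$ restricted to each $\Coh(\Spec A/\mathfrak{m}^{n})$ as deformation data against the algebraicity of $\cG$, I would lift this to a compatible system of maps $\Spec(A/\mathfrak{m}^{n}) \to \cG$, i.e.\ a morphism from the formal completion. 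The decisive subtlety is that $F$ is only right exact, not cocontinuous on all of $\QCoh(\cG)$, so Lurie-style quasi-coherent Tannaka duality is unavailable and one must argue entirely within the Noetherian category $\Coh$.

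The main obstacle, and the genuinely deep input, is the algebraization of this formal morphism: one must show the compatible system $\{\Spec(A/\mathfrak{m}^{n}) \to \cG\}$ arises from an honest morphism $\Spec A \to \cG$. This is precisely a Grothendieck existence / formal GAGA statement for Noetherian algebraic stacks with affine stabilizers, namely the coherent completeness of $\cG$ along its closed substacks, which is the technical engine of \cite{HallRydh} and the step I would expect to occupy the bulk of the work (equivalently, the identification $\Coh$ of the completion with $\varprojlim_n \Coh$ of the thickenings). Granting that algebraization theorem, the reconstructed formal morphism lifts to an actual one and essential surjectivity follows; combined with full faithfulness this gives the equivalence over complete local, hence over affine, $\cX$, and the descent and approximation reductions of the first paragraph then propagate it to every locally excellent $\cX$.
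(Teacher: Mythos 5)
You should first be aware that the paper contains no proof of this statement: Theorem~\ref{tannaka} is imported verbatim as Theorem~1.1 of \cite{HallRydh} and is used as a black box (it is the external input that makes the paper's own Theorem~\ref{tannaka main} a two-line argument). So there is no internal proof to compare against; the only meaningful comparison is with Hall--Rydh's actual argument. Measured against that, your outline is a broadly faithful roadmap: reducing to $\cX = \Spec A$ via the stack property of both sides, the classical affine case, passage to a complete local ring, the field/Artinian case via residual gerbes (where the affine-stabilizer hypothesis genuinely enters, extending earlier Tannakian results that required quasi-affine diagonal), deformation through infinitesimal thickenings, and algebraization of the formal morphism are indeed the skeleton of their proof, and you correctly identify that the functors are only right exact on $\Coh$, so cocontinuous $\QCoh$-Tannakian duality is unavailable.

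That said, there are slips and real gaps at exactly the load-bearing joints. First, the coherent completeness you invoke is misattributed: what is needed is completeness of the \emph{test object}, i.e.\ $\Coh(\Spec \wh A) \simeq \varprojlim \Coh(\Spec A/\mathfrak m^n)$ along the closed point, which is classical; the statement about maps out of coherently complete stacks with affine stabilizers is in \cite{HallRydh} a \emph{consequence} of Tannaka duality, not an input, so your appeal to it as the "granted" engine risks circularity as phrased. Second, your smooth-local reduction does not cover the return trip from $\wh A$ to $A$: the map $\Spec \wh A \to \Spec A$ is neither smooth nor locally of finite presentation, and this descent step—where excellence is actually consumed, via Artin approximation/Popescu—is asserted but not supplied. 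Third, the proposed Noetherian approximation of $\cG$ down to finite type over $\mathbf Z$ is neither available in this generality nor part of the Hall--Rydh argument; $\cG$ is merely Noetherian throughout. Fourth, that $\ctcat{\cG}$ is a stack for the smooth (let alone fppf) topology is itself one of the nontrivial things that must be proved, not a formal consequence of Lemmas~\ref{exact2eq} and~\ref{monoidality}, which only treat 2-equalizers of a single pair of functors rather than full descent data. In short: a reasonable sketch of the known proof with the hard steps named rather than done—which is defensible for a quoted theorem, but your write-up should make explicit that it is reconstructing \cite{HallRydh} rather than supplying a proof the paper omitted by oversight.
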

\begin{proof}[Proof of Theorem~\ref{tannaka main}]
Since patching holds for coherent sheaves for $\cX_\bullet \to \cX$, we have an 
equivalence of categories:
\[\ctcat{\cG}(\cX) = \rHom(\Coh(\cG), \Coh(\cX)) \cong \rHom(\Coh(\cG), 
\Coh(\cX_\bullet)))\]
and by Lemma~\ref{2-equalizer represents} we can identify this last category 
with the 2-equalizer
\[
\eq\left(
\xymatrix{
\rHom(\Coh(\cG), \Coh(\cX_0)
\ar@<-.5ex>[r] \ar@<.5ex>[r] & \rHom(\Coh(\cG), \Coh(\cX_1))
}
\right)\]
But this in turn is by definition the category $\ctcat{\cG}(\cX_\bullet)$, 
showing that patching holds.
\end{proof}
\subsection{Examples of patching for coherent sheaves}

We will now record, some contexts in which patching results are known to hold 
for categories of coherent sheaves, which thereby gives patching for maps to 
Noetherian algebraic stacks with affine stabilizers. In each of the following 
examples, we will let $\Coh$ denote the stack of coherent sheaves.

\subsubsection{Formal patching}

\begin{thm}\!\!$($\cite[FR, Prop.~4.2]{FR}$)$\label{formal coherent}
Let $X$ be a Noetherian, 1-dimensional scheme, $Z \subset X$ a finite subset of 
closed points, $U \subset X$ its open complement.
For a closed point $\xi \in Z$, let $X_\xi = \Spec(\wh{\ms O}_{X, \xi})$, 
$K_\xi$ the fraction field of $\wh{\ms O}_{X, \xi}$, $X' = \coprod_{\xi \in Z} 
X_\xi$ and $U' = \coprod_{\xi \in Z} \Spec(K_\xi)$. Then patching holds 
for coherent sheaves with respect to the patching context
\[
\xymatrix{
U' \ar@<-.5ex>[r]_-{d_1} \ar@<.5ex>[r]^-{d_0} & X' \times U \ar[r]^-{d} & X.
}
\]
\end{thm}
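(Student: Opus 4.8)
The plan is to unwind the definition of patching from Section~2.3. Writing $V := X' \sqcup U$ for the middle term of the context and recalling that $\Coh(V) = \Coh(X') \times \Coh(U)$, I must show that the pullback functor
\[ \Coh(X) \longrightarrow \eq\big( \Coh(V) \to \Coh(U') \big), \]
with the two structure functors being $d_0^\ast$ and $d_1^\ast$, is an equivalence of categories. This splits into full faithfulness and essential surjectivity. The statement is Zariski-local on $X$ away from the global open piece $U$, and the essential input at each point is an \emph{arithmetic fracture square}. Writing $A = \ms O_{X,\xi}$, $\wh A = \wh{\ms O}_{X,\xi}$, $K = K_\xi = \frc(\wh A)$, and $A_\eta$ for the semilocalization of $A$ at its minimal primes (through which the map $\Spec K_\xi \to U$ underlying $d_1$ factors), I first isolate the algebra.

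The claim I would establish is that
\[ 0 \longrightarrow A \longrightarrow \wh A \oplus A_\eta \longrightarrow K \longrightarrow 0 \]
is exact, i.e.\ the square with corners $A,\wh A, A_\eta, K$ is Cartesian with surjective lower-right map. Cartesianness is the identity $A = \wh A \cap A_\eta$ inside $K$, and surjectivity $\wh A + A_\eta = K$ is an excision/principal-parts statement; both hold because $\dim A = 1$ (so inverting a single nonzerodivisor of $\mathfrak m$ already produces all of $K$) together with the faithful flatness of $A \to \wh A$. The model case $A = \mathbb{Z}_{(p)}$, $\wh A = \mathbb{Z}_p$, $A_\eta = \mathbb{Q}$, $K = \mathbb{Q}_p$ exhibits exactly this Milnor square. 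Since both legs $A \to \wh A$ and $A \to A_\eta$ are flat, the sequence remains exact after $\otimes_A M$ for any finitely generated $M$, which is the form I will actually use.

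Full faithfulness then follows formally. For coherent $\mathcal{F},\mathcal{H}$ on $X$ I set $\mathcal{E} = \mathcal{H}om_{\ms O_X}(\mathcal{F},\mathcal{H})$, a coherent sheaf; because pullback along the flat maps commutes with forming $\mathcal{H}om$ of coherent sheaves, the comparison of Hom-groups is identified with the result of applying $\Gamma$ to the fracture square for $\mathcal{E}$. Exactness of that global sequence is checked stalkwise at each $\xi$ by the computation above (and is trivial away from $Z$, where the formal legs vanish and $V \to X$ is an isomorphism over $U$), giving $\Hom_X(\mathcal{F},\mathcal{H}) \simeq \Hom_{\eq}(\dots)$. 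Essential surjectivity is the gluing step: given $(\mathcal{E}',\mathcal{G},\phi)$ with $\mathcal{E}' \in \Coh(X')$, $\mathcal{G}\in\Coh(U)$, and $\phi$ an isomorphism of their restrictions to $U'$, I construct $\mathcal{F}$ by fiber product — on a Zariski neighborhood of each $\xi$ the associated module is $M := \wh M \times_{M_K} N$ (Milnor patching over the fracture square), while on $U$ it is $\mathcal{G}$, with $\phi$ supplying precisely the datum needed to glue these over the punctured disks $\Spec K_\xi$.

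The main obstacle is the finiteness-and-recovery step hidden in that last construction: one must show the fiber-product module $M$ is finitely generated over $A$ and that $\wh A \otimes_A M \cong \wh M$ and $A_\eta \otimes_A M \cong N$ compatibly, so that the gluing is genuinely inverse to restriction. This is where the Noetherian hypothesis, faithful flatness of completion, and the Artin–Rees lemma (to identify $\wh A \otimes_A M$ with the $\mathfrak m$-adic completion of $M$ and to control the comparison over $K$) all enter; it is the standard but non-formal content of Milnor patching over a flat Cartesian square. A secondary, purely bookkeeping point is the globalization: since the formal disks $X_\xi$ are not Zariski-open in $X$, the fiber-product sheaf must be assembled on honest Zariski neighborhoods of the $\xi$ and then glued to $\mathcal{G}$ along $U$, using that $Z$ is finite and the disks pairwise disjoint so the local constructions do not interact.
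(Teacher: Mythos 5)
First, a point of comparison: the paper does not prove this statement at all --- it is quoted directly from the cited source (Ferrand--Raynaud, Prop.~4.2), so there is no internal proof to measure your argument against. What you have written is, in outline, the standard argument behind that citation: the fracture/Milnor square $0 \to A \to \wh A \oplus A_\eta \to K \to 0$, full faithfulness by applying its left-exact part to $\mathcal{H}om_{\ms O_X}(\mathcal F,\mathcal H)$ (using that completion and localization are flat and commute with $\mathcal{H}om$ of coherent sheaves), and essential surjectivity by forming the fiber-product module on a Zariski neighborhood of each $\xi\in Z$ and gluing to $\mathcal G$ along $U$. Your identification of where one-dimensionality enters (surjectivity of $\wh A \oplus A_\eta \to K$, via $\wh A/s^n\wh A = A/s^nA$) and your handling of the fact that the formal disks are not Zariski-open are both correct; so is your silent correction of the middle object of the context to $X' \sqcup U$.

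The gap is that the step you label ``the main obstacle'' is not a technical footnote: it \emph{is} the theorem. Finite generation of $M = \wh M \times_{M_K} N$ over $A$ together with the recovery isomorphisms $\wh A \otimes_A M \cong \wh M$ and $M_\eta \cong N$ is precisely the content of the proposition being cited, so deferring it to ``standard Milnor patching over a flat Cartesian square'' is circular unless you actually carry out the Artin--Rees/descent argument (e.g.\ first for finite free modules, where it is the exactness of the fracture sequence tensored with a free module, then in general by choosing compatible finite presentations of $\wh M$ and $N$ and descending the presentation matrix). If the proof is meant to be self-contained, that argument must be written out. A smaller caveat: your surjectivity step inverts ``a nonzerodivisor of $\mathfrak m$,'' which need not exist for a non-reduced one-dimensional local ring, and $\wh{\ms O}_{X,\xi}$ need not be a domain even when ${\ms O}_{X,\xi}$ is; the theorem's own appeal to the ``fraction field'' of $\wh{\ms O}_{X,\xi}$ already presupposes the good situation, so you are no worse off than the statement, but the hypothesis should be made explicit (or $K_\xi$ replaced by the total ring of fractions throughout).
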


\begin{cor} \label{formal patch}
In the situation of Theorem~\ref{formal coherent}, patching holds for morphisms 
to Noetherian stacks with affine stabilizers. In particular, patching holds 
for categories of $G$-torsors for any linear algebraic group $G$.
\end{cor}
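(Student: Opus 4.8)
The plan is to read this corollary off from Theorem~\ref{tannaka main}: the statement is nothing more than that theorem applied to the particular patching context produced by Theorem~\ref{formal coherent}. Indeed, Theorem~\ref{formal coherent} asserts exactly the running hypothesis of Theorem~\ref{tannaka main}, namely that patching holds for coherent sheaves with respect to $\cX_\bullet \to \cX$ for this diagram. So the only things that remain to be confirmed before invoking Theorem~\ref{tannaka main} are that all the algebraic stacks occurring in the diagram --- here honest schemes --- are locally excellent, and, for the concluding sentence, that $BG$ qualifies as an admissible target $\cG$.

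First I would check local excellence of each scheme in the context, which is precisely the property needed so that the Tannakian recognition theorem of Hall--Rydh (Theorem~\ref{tannaka}), underlying Theorem~\ref{tannaka main}, applies to each of the three sources. The closed-point pieces $X_\xi = \Spec(\wh{\ms O}_{X,\xi})$ are spectra of complete Noetherian local rings, which are excellent, so their disjoint union $X'$ is locally excellent; each $\Spec(K_\xi)$ is the spectrum of a field and hence excellent, so $U'$ is locally excellent; and $U \subseteq X$ together with $X$ are locally excellent in the intended setting, whence the middle term of the diagram, assembled from $X'$ and $U$, is locally excellent as well. With every stack in $\cX_\bullet \to \cX$ locally excellent and coherent-sheaf patching in hand, Theorem~\ref{tannaka main} applies directly and yields that patching holds for $\Hom(\_,\cG)$ for every Noetherian algebraic stack $\cG$ with affine stabilizers.

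For the final assertion I would specialize to $\cG = BG$. When $G$ is a linear algebraic group, $BG$ is a Noetherian algebraic stack whose stabilizer is $G$ itself and therefore affine, so it meets the hypotheses on $\cG$. Since the identification $\Tors_G(\_) \simeq \Hom(\_, BG)$ is natural in the base, it is compatible with the pullback functors $d_i^*$, and hence with the $2$-equalizer that defines the patching category; patching for $\Hom(\_, BG)$ is thus literally patching for $G$-torsors. The one place I expect to have to take care is the local-excellence bookkeeping --- in particular the appeal to excellence of completed local rings and to whatever guarantees excellence of $X$ and $U$ --- along with the quick verification that $BG$ is Noetherian with affine stabilizers; beyond that, the argument is a verbatim application of Theorem~\ref{tannaka main}.
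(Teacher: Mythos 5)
Your proposal is correct and matches the paper's (implicit) argument exactly: the corollary is stated without a written proof precisely because it is the verbatim specialization of Theorem~\ref{tannaka main} to the patching context of Theorem~\ref{formal coherent}, with $\cG = BG$ for the final assertion. Your extra bookkeeping on local excellence (completions of Noetherian local rings and fields being excellent, and the tacit assumption that $X$ itself is locally excellent, which Theorem~\ref{formal coherent}'s bare ``Noetherian, 1-dimensional'' hypothesis does not literally guarantee) is a reasonable point of care that the paper glosses over.
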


\subsubsection{Thickened formal patching}

\begin{notn} \label{ring patching setup}
Let $T$ be a complete discrete valuation ring with uniformizer $t$, and
let $\mc X$ be a proper $T$-curve with reduced closed fiber $X$. Let $F$ be the 
function field of $\mc X$.
Suppose that $\mc P \subset X$ a finite subset of closed points, such that $X 
\setminus Z$ is a disjoint union of connected affine components.
Let $\mc U$ be the set of irreducible components of $X \setminus Z$.

For any connected affine open $W \subset X$, let $R_W$ denote the subring of 
elements of $F$ which are regular at every point of $W$ -- that is to say
\[R_U = \cap_{x \in W} \ms O_{\mc X, x}.\]
Let $\wh R_W$ be the $t$-adic completion $R_W$. By \cite{HHK:Weier}, $\wh R_W$ 
is a domain, and we let $F_W$ denote its fraction field.

For $P \in X$, a closed point, let $\wh R_P$ be the complete local ring of $\mc 
X$ at $P$. Finally, if $\wp$ is a height one prime of $\wh R_P$ lying over 
$t\wh R_P$, let $R_\wp$ be the localization of $\wh R_P$ at $\wp$ and $\wh R_P$ 
its $\wp$-adic completion (also coinciding with its $t$-adic completion). We 
let $\mc B$ denote the set of all such height one primes $\wp$.
Note that these are the branches of the closed subschemes $\overline U \subset 
\mc X$ at the points $P \in \mc P$.

We note (see for example \cite[Section~3.1]{HHK}), that whenever $\wp$ is a 
branch along $P$, there are natural inclusions of rings $\wh R_P \to \wh 
R_\wp$, and when $P$ is in the closure of a component $W$ of the reduced closed 
fiber, for each branch $\wp$ along $W$ (i.e., cut out by the ideal defining the 
closed set $\overline W$ in $\mc X$), there is an inclusion $\wh R_W \to \wh 
R_\wp$. Taken together, these give natural maps
\[\xymatrix{
\prod_{P \in \mc P} \wh R_P \ar[r] & \prod_{\wp \in \mc B} \wh R_P & \ar[l] 
\prod_{U \in \mc U} \wh R_U
}\]
\end{notn}

\begin{thm}\!\!$(${\rm see }\cite[Pries, Theorem~3.4]{Pries}$)$ \label{thickened coherent}
In the language of Notation~\ref{ring patching setup}, patching holds for 
coherent sheaves with respect to the patching context
\[\xymatrix{
\Spec\left(\prod_{\wh \in \mc B} \wh R_\wp\right) 
\ar@<.5ex>[r]^-{d_0} \ar@<-.5ex>[r]_-{d_1} 
& 
\Spec\left(\prod_{P \in \mc P} \wh R_P\right) \coprod \Spec\left(\prod_{U
\in \mc U} \wh R_U\right) 
\ar[r] 
&
\mc X
}\]
\end{thm}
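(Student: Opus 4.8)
The plan is to show that the comparison functor from $\Coh(\mc X)$ to the $2$-equalizer appearing in the statement is an equivalence by checking full faithfulness and essential surjectivity separately. Since the middle term of the patching context is a disjoint union of affine schemes, unwinding the $2$-equalizer shows that an object of the target is exactly a gluing datum: a finitely generated $\wh R_P$-module $M_P$ for each $P \in \mc P$, a finitely generated $\wh R_U$-module $M_U$ for each $U \in \mc U$, and, for each branch $\wp \in \mc B$ lying along a point $P$ and a component $U$, an isomorphism
\[ \theta_\wp \colon M_P \otimes_{\wh R_P} \wh R_\wp \simto M_U \otimes_{\wh R_U} \wh R_\wp. \]
So the assertion is a module-patching statement for the diamond of complete rings $\wh R_P, \wh R_U, \wh R_\wp$. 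The first step is to replace the non-affine $\mc X$ by this purely local data: since $\mc X$ is proper over the complete discrete valuation ring $T$, Grothendieck's existence theorem (formal GAGA) identifies $\Coh(\mc X)$ with coherent sheaves on the formal completion of $\mc X$ along its closed fiber $X$, and the latter is genuinely assembled from the $t$-adically complete pieces $\wh R_U$ (over the open components) and $\wh R_P$ (at the finitely many points of $\mc P$), overlapping along the branch rings $\wh R_\wp$.

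Full faithfulness is the soft, left-exact half of the statement. After this reduction it says that a morphism of coherent sheaves is detected and controlled by its restrictions to the affine pieces together with the requirement that these agree over the branches; concretely, for coherent $M, N$ one identifies $\Hom(M,N)$ with the equalizer of $\prod_P \Hom(M_P,N_P) \times \prod_U \Hom(M_U,N_U)$ against the corresponding product over the branches. This follows from the injectivity of the maps $\wh R_P, \wh R_U \to \wh R_\wp$ (the branches dominate the pieces) together with $t$-adic completeness, which force the two-sided agreement condition to cut out exactly the global homomorphisms; this is the separation part of the patching package of \cite{HHK}, and identifying it with an equalizer of Hom-groups is precisely the content of Lemma~\ref{exact2eq}.

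The heart of the proof, and the step I expect to be the main obstacle, is essential surjectivity, which rests on a factorization theorem for $\GL_n$: every $g \in \GL_n(\wh R_\wp)$ can be written $g = g_P\, g_U$ with $g_P \in \GL_n(\wh R_P)$ and $g_U \in \GL_n(\wh R_U)$. I would prove this by $t$-adic successive approximation, correcting $g$ toward the identity modulo higher and higher powers of $t$ and using completeness of the branch rings to pass to the limit; this is exactly the place where the Weierstrass-preparation results of \cite{HHK:Weier} enter. Granting the factorization, one first glues \emph{free} modules: each transition isomorphism $\theta_\wp$ between free modules is a $\GL_n$-valued datum, and the factorization lets one change bases on the two sides so as to make every $\theta_\wp$ the identity, whence the datum becomes constant and descends to a locally free coherent sheaf (a vector bundle) on $\mc X$.

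Finally, to handle an arbitrary coherent gluing datum $(M_P, M_U, \theta_\wp)$, I would choose finite free presentations of the $M_P$ and $M_U$ compatibly over the branches, apply the free case to the presenting maps, and realize the desired global sheaf as a cokernel. Here one uses that the branch pullbacks $\otimes_{\wh R_P}\wh R_\wp$ and $\otimes_{\wh R_U}\wh R_\wp$ are flat, hence exact, so that Lemma~\ref{exact2eq} applies and guarantees that kernels and cokernels in the $2$-equalizer are computed piecewise; this lets the cokernel of the global presenting map represent the given datum. Once the $\GL_n$-factorization is in hand, all of these remaining steps are formal, so the entire weight of the theorem sits on that factorization, where the completeness of $T$ and the specific geometry of the branches are indispensable.
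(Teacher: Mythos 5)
Your proposal and the paper part ways at the very first step: the paper does not prove this theorem at all. Its entire ``proof'' is the observation that the statement \emph{is} \cite[Theorem~3.4]{Pries}, once one notes that the assumed affineness of the components $U \in \mc U$ lets one replace Pries's categories of modules over formal completions by modules over the honest complete rings $\wh R_U$. What you have written is, in effect, a reconstruction of the proof of the cited result itself (which goes back to Harbater's formal patching): Grothendieck existence to pass from $\Coh(\mc X)$ to the formal completion along the closed fiber, full faithfulness from the intersection/separation properties of the rings inside the branch rings, and essential surjectivity from a $\GL_n$-factorization established by $t$-adic successive approximation, first for free modules and then for general coherent sheaves via presentations and Lemma~\ref{exact2eq}. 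That is the correct architecture and matches the literature; what the paper's route buys is brevity and a clean division of labor (all the hard analysis is quarantined in the reference), while yours buys self-containedness.

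Two cautions if you intend your sketch to stand as a proof rather than a pointer. First, the factorization $\GL_n(\wh R_\wp) = \GL_n(\wh R_P)\cdot\GL_n(\wh R_U)$ is not obtained in one stroke by successive approximation: that method naturally produces factorizations only for matrices congruent to the identity modulo $t$, and a separate argument (working on the reduced closed fiber $X$, where one has a genuine Ferrand--Raynaud situation, or a twisting argument) is needed to reduce an arbitrary transition matrix to that congruence subgroup; as stated, your key lemma is stronger than what the approximation argument alone delivers. Second, your appeal to \cite{HHK:Weier} is slightly misplaced: in this paper that reference is invoked only to know that $\wh R_W$ is a domain (so that $F_W$ makes sense), not to drive the matrix factorization. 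Neither point changes the overall verdict that your strategy is the standard and correct one for the quoted theorem.
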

\begin{proof}
This is precisely the statement of \cite{Pries} with the additional assumption 
that the schemes $U$ are affine, which allows us to replace the categories of 
modules over the formal completions with the modules over the corresponding 
complete rings.
\end{proof}

\begin{cor} \label{thickened patch}
In the situation of Theorem~\ref{thickened coherent}, patching holds for 
morphisms to Noetherian stacks with affine stabilizers. In particular, 
patching holds for categories of $G$-torsors for any linear algebraic group $G$.
\end{cor}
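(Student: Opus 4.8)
The plan is to obtain this as a direct application of Theorem~\ref{tannaka main} to the patching context supplied by Theorem~\ref{thickened coherent}. That theorem already supplies exactly the input hypothesis of Theorem~\ref{tannaka main}: in the language of Notation~\ref{ring patching setup}, patching holds for coherent sheaves with respect to the context with $\cX_1 = \Spec(\prod_{\wp \in \mc B} \wh R_\wp)$, with $\cX_2 = \Spec(\prod_{P \in \mc P} \wh R_P) \coprod \Spec(\prod_{U \in \mc U} \wh R_U)$, and with base $\mc X$. So once the remaining hypothesis of Theorem~\ref{tannaka main} is checked, the coherent-sheaf patching statement upgrades automatically to a patching statement for $\Hom(\_, \cG)$.

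The one hypothesis that requires verification is that $\cX_1$, $\cX_2$, and $\mc X$ are locally excellent algebraic stacks. I would check this ring by ring. Each $\wh R_P$ is a complete Noetherian local ring, hence excellent; each $\wh R_U$ and each $\wh R_\wp$ arises as a $t$-adic (equivalently $\wp$-adic) completion of a Noetherian ring obtained from the $\wh R_P$ or $\wh R_W$ by the localization-and-completion operations of Notation~\ref{ring patching setup}, and these operations preserve excellence in this setting (completions of excellent local rings are excellent, and localizations of excellent rings are excellent). Since $\mc P$, $\mc U$, and $\mc B$ are all finite, $\cX_1$ and $\cX_2$ are finite disjoint unions of spectra of excellent rings and are therefore locally excellent; and $\mc X$, being a proper curve over the complete (hence excellent) discrete valuation ring $T$, is itself excellent. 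Thus all hypotheses of Theorem~\ref{tannaka main} are satisfied.

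With local excellence established, Theorem~\ref{tannaka main} yields that patching holds for $\Hom(\_, \cG)$ with respect to this context for every Noetherian algebraic stack $\cG$ with affine stabilizers, which is the first assertion. For the final statement I would specialize to $\cG = BG$, the classifying stack of a linear algebraic group $G$: since $G$ is affine, $BG$ is a Noetherian algebraic stack whose stabilizer at every point is $G$, hence affine, and $\Hom(\_, BG)$ is precisely the stack of $G$-torsors (as already used in the proof of the six-term sequence above). I expect the only genuine obstacle to be the excellence verification; everything else is a formal concatenation of Theorem~\ref{thickened coherent} with Theorem~\ref{tannaka main}, and the argument is identical in form to the one proving Corollary~\ref{formal patch}.
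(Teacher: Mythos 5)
Your proposal is correct and matches the paper's intent exactly: the paper states this corollary with no proof at all, treating it as an immediate application of Theorem~\ref{tannaka main} to the patching context of Theorem~\ref{thickened coherent}, which is precisely your argument. Your explicit verification of local excellence of the rings $\wh R_P$, $\wh R_U$, $\wh R_\wp$ and of $\mc X$ is a hypothesis the paper leaves implicit, so spelling it out is a welcome addition rather than a deviation.
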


\subsubsection{Field patching}
\begin{thm}\!\!$($\cite[Prop.~3.9]{HHK:pop}$)$ \label{field patching}
In the language of Notation~\ref{ring patching setup}, suppose we are given an 
open affine connected subset $W \subset X$ and a finite collection of closed 
points $\mc Q \subset W$. Let $\mc V$ be the set of connected components of $W 
\setminus \mc Q$, and let $\mc B$ denote the collection of branches along $W$ 
at the points $Q \in \mc Q$. Then patching holds for coherent sheaves with 
respect to the patching context
\[\xymatrix{
\Spec\left(\prod_{\wh \in \mc B} F_\wp\right) 
\ar@<.5ex>[r]^-{\pi_1} \ar@<-.5ex>[r]_-{\pi_2} 
&
\Spec\left(\prod_{Q \in \mc Q} F_Q\right) \coprod \Spec\left(\prod_{V \in
\mc V} F_V\right) 
\ar[r] 
& 
\Spec F_W
}\]
\end{thm}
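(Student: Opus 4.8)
The plan is to deduce the statement from the field-patching equivalence of \cite[Prop.~3.9]{HHK:pop}, the bulk of the work being a translation between the language of 2-equalizers used here and the language of patching problems used there. First I would observe that, because $\mc Q$, $\mc V$ and $\mc B$ are finite, every scheme in the diagram is a finite disjoint union of spectra of fields. Hence $\Coh(\Spec F_W)$ is the category of finite-dimensional $F_W$-vector spaces, while coherent sheaves on the middle and left terms are, respectively, families $\bigl((M_Q)_{Q \in \mc Q},\, (N_V)_{V \in \mc V}\bigr)$ of finite-dimensional vector spaces over the $F_Q$ and $F_V$, and families $(L_\wp)_{\wp \in \mc B}$ of finite-dimensional $F_\wp$-vector spaces. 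The two pullback functors $\pi_1^{*},\pi_2^{*}$ are base change along the inclusions $F_{Q} \to F_\wp$ and $F_{V} \to F_\wp$ attached to each branch $\wp$, where $\pi_1$ records the point $Q$ lying on the branch and $\pi_2$ records the adjacent component $V$.

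Next I would unwind the definition of the 2-equalizer $\Coh(U_\bullet)$. Its objects are pairs consisting of such a family $\bigl((M_Q),(N_V)\bigr)$ together with an isomorphism $\phi\colon \pi_2^{*}\bigl((M_Q),(N_V)\bigr) \simto \pi_1^{*}\bigl((M_Q),(N_V)\bigr)$, i.e.\ a collection of $F_\wp$-isomorphisms $\phi_\wp\colon N_{V(\wp)} \otimes_{F_{V(\wp)}} F_\wp \simto M_{Q(\wp)} \otimes_{F_{Q(\wp)}} F_\wp$, one for each branch; morphisms are the families of linear maps compatible with the $\phi_\wp$. This is precisely the category of field-patching problems for the system $\{F_W;\, (F_Q),(F_V);\, (F_\wp)\}$, and under this dictionary the comparison functor $\Coh(\Spec F_W) \to \Coh(U_\bullet)$ of the patching definition is the base-change functor sending an $F_W$-vector space $M$ to $\bigl((M\otimes_{F_W}F_Q),(M\otimes_{F_W}F_V)\bigr)$ equipped with the canonical gluing isomorphisms.

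With the dictionary in place, the theorem is exactly the assertion that this base-change functor is an equivalence. I would then verify that the geometric data of Notation~\ref{ring patching setup} meet the hypotheses of \cite[Prop.~3.9]{HHK:pop}: concretely, that $\{F_W;(F_Q),(F_V);(F_\wp)\}$ is a \emph{factorization inverse system}, which amounts to the intersection property that $F_W$ is the equalizer inside $\prod_\wp F_\wp$ of the two maps out of $\prod_Q F_Q \times \prod_V F_V$, together with the simultaneous factorization property for $\GL_n$ over the branch fields. Full faithfulness of the base-change functor is the formal half: a morphism of patching problems is a compatible family $(f_Q, f_V)$ whose base changes agree over every $F_\wp$, so the intersection property glues it to a unique $F_W$-linear map, and this identifies the $\Hom$-sets.

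The essential surjectivity is the crux and the step I expect to be the main obstacle: to realize a patching problem $\bigl((M_Q),(N_V),(\phi_\wp)\bigr)$ by an $F_W$-vector space one must, after trivializing the $M_Q$ and $N_V$, write each transition isomorphism $\phi_\wp \in \GL_n(F_\wp)$ as a product of matrices pulled back from the $F_Q$ and $F_V$ sides. This \emph{simultaneous factorization} is where the arithmetic of the setup genuinely enters, and it is exactly what \cite[Prop.~3.9]{HHK:pop} provides; accordingly, the only original content needed here is the reconciliation of formulations carried out above, after which the theorem follows by citing the reference.
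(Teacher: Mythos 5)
Your proposal is correct and matches the paper's treatment: the paper offers no proof beyond the citation of \cite[Prop.~3.9]{HHK:pop}, and your argument is exactly that citation together with the (routine but worth recording) dictionary identifying coherent sheaves on these disjoint unions of field spectra with finite-dimensional vector spaces, the 2-equalizer with the category of patching problems, and the comparison functor with base change. Nothing further is needed.
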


\begin{thm}\!\!$($\cite[Prop.~3.10]{HHK:pop}$)$ \label{field patching small}
In the language of Notation~\ref{ring patching setup}, suppose we are given
a proper birational morphism $f: Y \to X$ and a closed point
$P \in \mc P$. Let $V \subset Y$ be inverse image of $P$ in $Y$, and let
$\til X$ be the proper transform of $X$. Suppose that $\dim(V) = 1$, and
that $f$ restricts to an isomorphism $\ms Y \setminus V \to \ms X \setminus
\{P\}$. Choose $\mc Q$ a finite collection of closed points $\mc P$ of $V$
including all the points of $V \cap \til X$.  Let $\mc W$ be the set of
connected components of $V \setminus \mc P$, and let $\mc B'$ be the set of
branches at the points in $\mc P$ along the components of $V$. 
Then patching holds for
coherent sheaves with respect to the patching context
\[\xymatrix{
\Spec\left(\prod_{\wh \in \mc B'} F_\wp\right) 
\ar@<.5ex>[r]^-{\pi_1} \ar@<-.5ex>[r]_-{\pi_2} 
&
\Spec\left(\prod_{Q \in \mc Q} F_Q\right) \coprod \Spec\left(\prod_{U \in
\mc W} F_U\right) 
\ar[r] 
& 
\Spec F_P
}\]
\end{thm}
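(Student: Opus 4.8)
The plan is to recognize this statement as a translation of \cite[Prop.~3.10]{HHK:pop} into the language of patching for coherent sheaves, so that the real work is building the dictionary rather than reproving the factorization theorem underlying field patching. First I would unwind the patching context using the definitions of Section~2. Since $F_P$ and each of the $F_Q$, $F_U$, $F_\wp$ are fields, and the various $\Spec$ appearing are spectra of finite products of such fields, the category $\Coh$ of each is simply the category of finite modules, i.e. tuples of finite-dimensional vector spaces indexed by the relevant factors. Consequently, by the definition of the 2-equalizer, an object of $\Coh(U_\bullet)$ is precisely a \emph{patching problem} in the sense of Harbater--Hartmann--Krashen: a finite-dimensional vector space over each $F_Q$ and each $F_U$, together with gluing isomorphisms over each branch field $F_\wp$ identifying the base changes of the two adjacent patches, with morphisms being compatible tuples of linear maps.

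Next I would identify the geometric data of this patching context with the covering datum used in \cite{HHK:pop}. The proper birational morphism $f \colon Y \to X$ with one-dimensional fiber $V = f^{-1}(P)$ replaces the point $P$, over whose local field $F_P$ one cannot patch directly, by the one-dimensional scheme $V$, on which the field-patching apparatus applies. The hypotheses that $\dim V = 1$, that $f$ restricts to an isomorphism away from $V$, and that $\mc Q$ contains every point of $V \cap \til X$ are exactly what guarantee that the components $\mc W$ of $V \setminus \mc Q$, the points $\mc Q$, and the branches $\mc B'$ assemble into a valid covering of $V$ (equivalently of $\Spec F_P$) to which their result applies; in particular $F_P$ is recovered as the base field over which all the patches sit.

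With the dictionary in place, I would invoke the main field-patching equivalence of \cite[Prop.~3.10]{HHK:pop}, which asserts that base change from $F_P$ yields an equivalence between finite-dimensional $F_P$-vector spaces and the category of patching problems assembled from the fields $F_Q$, $F_U$, $F_\wp$. Under the translations above this is precisely the assertion that $\Coh(\Spec F_P) \to \Coh(U_\bullet)$ is an equivalence, which by definition is the statement that patching holds for coherent sheaves with respect to the given patching context.

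The step I expect to be the genuine obstacle -- and the reason the result is attributed to \cite{HHK:pop} rather than proved here -- is the underlying factorization theorem: that every element of $\GL_n$ over the product $\prod_{\wp \in \mc B'} F_\wp$ of branch fields factors into contributions coming from the two families of patches. Everything in the reduction above is formal once this factorization is available, but establishing it requires the analytic input of field patching over the blown-up curve and is not something one can recover from the abstract 2-equalizer formalism of Section~2 alone.
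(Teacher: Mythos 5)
Your proposal is correct and matches the paper's treatment: the paper offers no proof of Theorem~\ref{field patching small} beyond the citation of \cite[Prop.~3.10]{HHK:pop}, and your dictionary --- identifying $\Coh$ of a finite product of fields with tuples of finite-dimensional vector spaces, the 2-equalizer with the HHK category of patching problems, and the equivalence with the cited proposition (whose real content is the simultaneous $\GL_n$-factorization over the branch fields) --- is exactly the implicit translation the authors are relying on. This is the same move they make explicit in the one-line proof of Theorem~\ref{thickened coherent}, so nothing is missing.
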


\begin{cor} \label{pop cor}
In the situation of 
Theorems~\ref{field patching} and~\ref{field patching small}, 
patching holds for morphisms to Noetherian stacks with affine
stabilizers. In particular, patching holds for categories of $G$-torsors
for any linear algebraic group $G$ defined over $F_W$ and $F_P$ in the
respective contexts.
\end{cor}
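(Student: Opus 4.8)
The plan is to obtain this corollary as a direct application of Theorem~\ref{tannaka main}, following the same pattern as Corollaries~\ref{formal patch} and~\ref{thickened patch}. Theorems~\ref{field patching} and~\ref{field patching small} assert precisely that patching holds for coherent sheaves with respect to the two displayed patching contexts, so the only hypothesis of Theorem~\ref{tannaka main} still to be checked is that each algebraic stack occurring in these contexts is locally excellent. Every such object is the spectrum of a field or of a finite product of fields---namely $F_\wp$, $F_Q$, $F_V$, $F_W$ in the setting of Theorem~\ref{field patching}, and $F_\wp$, $F_Q$, $F_U$, $F_P$ in the setting of Theorem~\ref{field patching small}. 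Since any field is an excellent ring and a finite product of excellent rings is again excellent, each of these spectra is excellent, hence certainly locally excellent, and the hypotheses of Theorem~\ref{tannaka main} are satisfied.

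Granting this, Theorem~\ref{tannaka main} yields at once that patching holds for $\Hom(\_, \cG)$ with respect to each of the two patching contexts, for every Noetherian algebraic stack $\cG$ with affine stabilizers, which is the first assertion. For the statement about torsors I would specialize to $\cG = BG$, the classifying stack of a linear algebraic group $G$ over the relevant base field ($F_W$ in the first context, $F_P$ in the second). Because $G$ is affine and of finite type over a field, $BG$ is a Noetherian algebraic stack whose only stabilizer is $G$ itself, which is affine; thus $BG$ is an admissible target for Theorem~\ref{tannaka main}. Since for any algebraic stack $\cX$ the category $\Hom(\cX, BG)$ is canonically equivalent to the category of $G$-torsors on $\cX$, patching for $\Hom(\_, BG)$ is exactly patching for $G$-torsors, completing the argument.

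I do not expect a genuine obstacle here: the entire weight of the corollary is carried by Theorem~\ref{tannaka main} together with the cited coherent-sheaf patching theorems. The only points requiring any care are the (immediate) verification of local excellence for spectra of finite products of fields, and the standard identification of $\Hom(\_, BG)$ with the fibered category of $G$-torsors; neither presents a real difficulty.
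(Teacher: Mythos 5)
Your proposal is correct and matches the paper's (implicit) argument: the corollary is stated without proof precisely because it is the immediate specialization of Theorem~\ref{tannaka main} to the patching contexts of Theorems~\ref{field patching} and~\ref{field patching small}, with the torsor statement obtained by taking $\cG = BG$. Your added checks---that spectra of finite products of fields are locally excellent and that $BG$ is Noetherian with affine stabilizers---are exactly the routine verifications the paper leaves to the reader.
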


We note that this was known to hold for groups which were defined over $F$,
the function field of $\mc X$, but not necessarily for groups over $F_W$
and $F_P$.

\section{Relative categories of coherent sheaves}

The main result of the section is the following.

\begin{thm} \label{relative patch}
Suppose we are given morphisms $$X_1\arr{d_0}{d_1}X_0\to X$$ of  
excellent Noetherian schemes such that patching holds for coherent sheaves with 
respect to $X_\bullet \to X$.
Let $S \to X$ be a proper algebraic space, and 
let $S_\bullet=S\times_{X} X_\bullet$ be the associated diagram of spaces induced by pullback. Then patching holds for coherent sheaves with respect to $S_\bullet 
\to S$ (and hence it also holds for morphisms to Noetherian algebraic stacks 
with affine stabilizers by Theorem \ref{tannaka main}).
\end{thm}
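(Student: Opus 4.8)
The goal is to show that the comparison functor $\Coh(S) \to \Coh(S_\bullet) = \eq\left(\Coh(S_0) \rightrightarrows \Coh(S_1)\right)$ is an equivalence, where $S_i = S \times_X X_i$ and each resulting square is Cartesian with proper vertical maps $\pi_i \colon S_i \to X_i$ (base changes of $S \to X$). The one feature not already present in the absolute statement over $X$ is properness: over a Noetherian base, proper pushforward of a coherent sheaf is again coherent (the coherence theorem for proper algebraic spaces), so both the $\Hom$-groups $\Hom_S(\mathcal F, \mathcal G) = \Gamma(X, \pi_* \mathcal{H}om(\mathcal F, \mathcal G))$ and, more importantly, the sheaves we wish to descend are controlled by data on $X$, where patching is assumed. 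The plan is to first treat the case of a projective morphism via the relative $\underline{\operatorname{Proj}}$ construction, and then remove projectivity using Chow's lemma together with Noetherian induction.

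For the projective case, fix a relatively ample $\mathcal{O}(1)$ and set $\mathcal A = \bigoplus_{n \ge 0} \pi_* \mathcal{O}(n)$, a graded $\mathcal{O}_X$-algebra with $S = \underline{\operatorname{Proj}}_X \mathcal A$. Because $\underline{\operatorname{Proj}}$ and the twisting sheaves commute with arbitrary base change, each $S_i$ is $\underline{\operatorname{Proj}}_{X_i}$ of the pulled-back algebra $\mathcal A \otimes_{\mathcal{O}_X} \mathcal{O}_{X_i}$, so no flatness of $S$ over $X$ is required and the transition functors $d_i^*$ correspond to base change of graded modules. Writing $\Coh(S)$ as the Serre quotient of finitely generated graded $\mathcal A$-modules by the torsion subcategory (and likewise for each $S_i$), I would deduce patching for graded $\mathcal A$-modules from the assumed patching for coherent $\mathcal{O}_X$-modules applied degree by degree: the algebra structure on $\mathcal A$ and the module structures are preserved because the 2-equalizer is symmetric monoidal (Lemma~\ref{monoidality}), and the torsion subcategory is respected because the 2-equalizer is faithfully exact (Lemma~\ref{exact2eq}, the transition maps in the contexts of interest being flat). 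Passing to the quotient then yields the equivalence $\Coh(S) \simeq \Coh(S_\bullet)$.

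For a general proper algebraic space $S \to X$, I would invoke Chow's lemma for algebraic spaces to produce a projective birational $p \colon S' \to S$ with $S' \to X$ projective, chosen to be an isomorphism over a dense open $U \subseteq S$. The projective case gives the theorem for $S'$, and Noetherian induction on the lower-dimensional closed complement $Z = S \setminus U$ gives it for sheaves supported on $Z$; the two are combined by the standard d\'evissage comparing a coherent sheaf on $S$ with $p_* p^*$ of itself through short exact sequences supported on the locus where $p$ fails to be an isomorphism. Here the faithful exactness of the 2-equalizer (Remark~\ref{exact remark}) is what permits running these kernel and cokernel and five-lemma arguments inside $\Coh(S_\bullet)$ exactly as in $\Coh(S)$.

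The main obstacle is this last passage from projective schemes to proper algebraic spaces. The delicate points are to carry the gluing isomorphism over $S_1$ coherently through the Noetherian induction on supports, and to check that the comparison functor intertwines $p_*$, $p^*$, and restriction to $Z$ at the level of 2-equalizers. By contrast, the base-change difficulties one would otherwise expect, namely that pushforward need not commute with the possibly non-flat maps $d_i$, are deliberately avoided in the projective step by phrasing everything through $\underline{\operatorname{Proj}}$ and the fixed graded algebra $\mathcal A$, which is why the reduction is organized in this order.
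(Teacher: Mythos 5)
Your overall architecture matches the paper's: handle the projective case first, then reduce the general proper case to it via Chow's lemma and Noetherian induction on supports, using a map that is an isomorphism over a dense open and devissage on its kernel and cokernel. The divergence, and the gap, is in the projective step. You propose to identify $\Coh(S_\bullet)$ with a Serre quotient of an equalizer of graded-module categories and to patch ``degree by degree,'' but the final sentence ``passing to the quotient then yields the equivalence'' hides the entire difficulty: an object of the 2-equalizer of the quotient categories $\operatorname{gr}\text{-}\mathcal A_i\text{-mod}/(\text{torsion})$ comes with a descent isomorphism over $S_1$ that is only a morphism \emph{in the quotient}, i.e.\ defined up to torsion (equivalently, only on a high-degree truncation), so it does not immediately give compatible isomorphisms of coherent $\mathcal O_{X_1}$-modules in each degree to which the hypothesis can be applied. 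Lifting that isomorphism to one of honest graded modules is precisely where the content lies, and the standard way to do it is the generation argument the paper runs directly inside $\Coh(S_\bullet)$: twist so that $\pi^*\pi_*V_\bullet(n)\to V_\bullet(n)$ is surjective for $n\gg 0$, repeat on the kernel, and exhibit $V_\bullet(n+m)$ as a cokernel of a morphism between objects already known to be in the essential image (Lemma~\ref{lem:projective case described above}). Your route is not wrong in spirit, but as written it assumes the conclusion of that argument rather than supplying it.

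A second, related gap is full faithfulness for a general proper $S\to X$, which both your devissage and the closure of the essential image under cokernels require (you must descend \emph{morphisms}, not just objects, from $S_\bullet$ to $S$). The formula $\Hom_S(W,V)=\Gamma(X,\pi_*\mathcal{H}om(W,V))$ does not by itself reduce this to patching on $X$, because $\pi_*\mathcal{H}om$ need not commute with the base changes $d_i$ (which are not assumed flat, and $W,V$ are not flat over $X$); this is exactly the base-change problem you say you avoid, but it reappears here and again when you need $p_*$ to interact with the equalizer. The paper's Lemma~\ref{fullyfaithful} resolves it by applying Theorem~\ref{tannaka main} to the affine Hom-scheme over $X$ representing $T\mapsto\Hom_{\mathcal O_{S_T}}(W_T,V_T)$, and then organizes the Chow's-lemma step so that only closure of the essential image under kernels, cokernels and extensions (Lemma~\ref{eimage-lemma}) is needed, never a base-change isomorphism for $\theta_*$. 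You correctly identify these as the delicate points, but deferring them leaves the proof incomplete, since they are where properness and the Tannakian input actually enter.
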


The proof of this theorem will occupy the remainder of the section.

\begin{lem} \label{fflat}
The map $S_0 \to S$ is faithfully flat.
\end{lem}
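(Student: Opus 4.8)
The plan is to reduce the entire statement to the faithful flatness of the base map $d\colon X_0\to X$. By construction $S_0=S\times_X X_0$, so $S_0\to S$ is the base change of $d$ along $S\to X$; since faithful flatness is stable under base change, it suffices to prove that $d\colon X_0\to X$ is faithfully flat.

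First I would extract faithfulness of $d^*$ from the patching hypothesis. The equivalence of categories provided by patching for coherent sheaves is the composite
\[
\Coh(X)\xrightarrow{\sim}\Coh(X_\bullet)=\eq(d_0^*,d_1^*)\xrightarrow{\eta}\Coh(X_0),
\]
where the first functor sends $\mc F$ to $(d^*\mc F,\ \text{canonical descent datum})$ and $\eta$ is the projection from the $2$-equalizer; this composite is naturally isomorphic to $d^*$. By Lemma~\ref{exact2eq}(1) the functor $\eta$ is faithful, and the first functor is an equivalence, so $d^*$ is faithful. In particular $d^*$ carries no nonzero coherent sheaf to $0$, since the identity of such a sheaf would otherwise map to the zero endomorphism of the zero object.

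For flatness I would upgrade faithfulness to faithful exactness. Once the pullback functors $d_0^*,d_1^*\colon\Coh(X_0)\to\Coh(X_1)$ are exact, Lemma~\ref{exact2eq}(2) shows that $\eta$ is faithfully exact, and the patching equivalence, being an equivalence of abelian categories, is exact; hence $d^*$ is faithfully exact. A morphism of Noetherian schemes is flat as soon as its pullback functor on coherent sheaves is exact (exactness on the inclusions $\mc I\hookrightarrow\ms O_X$ of ideal sheaves is the local flatness criterion), so $d$ is flat. A flat morphism all of whose nonzero coherent sheaves have nonzero pullback is faithfully flat; combining flatness with the previous paragraph gives faithful flatness of $d$, and therefore of $S_0\to S$.

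The main obstacle is exactly the input invoked in the flatness step: the exactness of $d_0^*$ and $d_1^*$, equivalently the flatness of $d_0,d_1\colon X_1\to X_0$. This does not follow formally from the equivalence $\Coh(X)\simeq\Coh(X_\bullet)$ alone, because without flat $d_0,d_1$ the abelian structure transported from $\Coh(X)$ need not compute kernels and cokernels as they are computed in $\Coh(X_0)$. In each patching context under consideration (formal, thickened, and field patching) the structure maps are built from localizations, completions, and open immersions and are flat by inspection, so I would either record the flatness of the $d_i$ as part of the patching data or verify it context by context; once this is in hand the argument above applies. Most directly, in each of these concrete situations the faithful flatness of $X_0\to X$ can be checked by hand and then base-changed along $S\to X$.
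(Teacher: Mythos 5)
Your argument is essentially the paper's: the patching equivalence composed with the projection from the $2$-equalizer exhibits $d^*\colon\Coh(X)\to\Coh(X_0)$ as faithfully exact via Lemma~\ref{exact2eq}, whence $X_0\to X$ is faithfully flat, and $S_0\to S$ inherits this by base change. The one point where you go beyond the paper --- insisting that Lemma~\ref{exact2eq}(2) requires $d_0^*,d_1^*$ to be exact, i.e.\ $d_0,d_1$ flat, a hypothesis the paper's one-line proof uses silently --- is a legitimate observation; since it holds by inspection in every patching context the paper considers, your proposal to record it as part of the patching data (or verify it case by case) closes that gap without changing the argument.
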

\begin{proof}
By Lemma~\ref{exact2eq} and the fact that pullback induces an isomorphism $\Coh(X) \simto \Coh(X_\bullet)$, 
it follows that the pullback map $\Coh(X) \to \Coh(X_0)$ is faithfully 
exact, which tells us that $X_0 \to X$ is faithfully flat. But since $S_0 
\to S$ is obtained from this by pullback, it is also faithfully flat.
\end{proof}

\begin{lem} \label{fexact}
The functor $\Coh(S) \to \Coh(S_\bullet)$ commutes with the formation of 
kernels and cokernels (i.e. is faithfully exact).
\end{lem}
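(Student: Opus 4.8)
The plan is to deduce the statement from Remark~\ref{exact remark}, which reduces faithful exactness of a functor landing in a $2$-equalizer to faithful exactness of the induced functor into the first factor, provided the two defining functors are themselves exact. Concretely, $\Coh(S_\bullet)$ is by definition the $2$-equalizer of the pullback functors $d_0^*, d_1^* \colon \Coh(S_0) \to \Coh(S_1)$; the functor $\Coh(S) \to \Coh(S_\bullet)$ is pullback along $S_0 \to S$ equipped with its canonical descent datum; and the associated functor $\Coh(S) \to \Coh(S_0)$ is ordinary pullback along $S_0 \to S$. Thus it suffices to check two things: that $\Coh(S) \to \Coh(S_0)$ is faithfully exact, and that $d_0^*$ and $d_1^*$ are exact.

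The first point is immediate from the work already done: by Lemma~\ref{fflat} the morphism $S_0 \to S$ is faithfully flat, so pullback of coherent sheaves along it is exact by flatness and reflects exact sequences by faithful flatness, hence is faithfully exact; since $S$ and $S_0$ are Noetherian, pullback also preserves coherence, so $\Coh(S) \to \Coh(S_0)$ is a well-defined faithfully exact functor. For the second point I would argue that $d_0, d_1 \colon S_1 \to S_0$ are flat. These maps are obtained by base change along $S \to X$ from the transition maps $d_0, d_1 \colon X_1 \to X_0$ of the patching context, and flatness is stable under base change, so it is enough that $X_1 \to X_0$ be flat. This holds in the patching contexts under consideration, where the transition morphisms are built from localizations, completions, and passage to fraction fields; it is moreover exactly the exactness of $d_0^*, d_1^*$ that makes the $2$-equalizer $\Coh(X_\bullet)$ abelian via Lemma~\ref{exact2eq}, so it is implicit in the hypothesis that patching holds for coherent sheaves. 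Once $d_0, d_1 \colon S_1 \to S_0$ are flat, the pullbacks $d_0^*, d_1^* \colon \Coh(S_0) \to \Coh(S_1)$ are exact.

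With both hypotheses in hand, Remark~\ref{exact remark} applies directly and yields that $\Coh(S) \to \Coh(S_\bullet)$ is faithfully exact, i.e.\ commutes with the formation of kernels and cokernels, as claimed. The only genuinely substantive input beyond the $2$-equalizer formalism and Lemma~\ref{fflat} is the flatness of the transition maps $d_0, d_1$, so that is the step I would take the most care to pin down; everything else is formal.
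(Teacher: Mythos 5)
Your proof is correct and follows essentially the same route as the paper: Lemma~\ref{fflat} gives faithful flatness of $S_0 \to S$, hence faithful exactness of $\Coh(S) \to \Coh(S_0)$, and Remark~\ref{exact remark} finishes the argument. The only difference is that you explicitly verify the hypothesis of Remark~\ref{exact remark} that $d_0^*, d_1^*\colon \Coh(S_0)\to\Coh(S_1)$ be exact (via flatness of $d_0, d_1\colon S_1\to S_0$, pulled back from $X_1\to X_0$), a point the paper leaves implicit here and throughout.
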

\begin{proof}
Since $S_0 \to S$ is faithfully flat, we have that $\Coh(S) \to \Coh(S_0)$ is 
faithfully exact. The conclusion follows from Remark~\ref{exact remark}.
\end{proof}

\begin{lem} \label{fullyfaithful}
The functor $\Coh(S) \to \Coh(S_\bullet)$ is fully faithful.
\end{lem}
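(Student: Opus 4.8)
The plan is to compute both the source and the target of the functor as the global sections of a single coherent sheaf on $X$, and then import full faithfulness from patching on $X$. Fix coherent sheaves $\mathcal{G},\mathcal{H}$ on $S$ and set $\mathcal{E}=\underline{\Hom}_{\cO_S}(\mathcal{G},\mathcal{H})$, which is coherent since $S$ is Noetherian. Write $\pi\colon S\to X$ for the structure morphism, $d\colon S_0\to S$ and $e\colon S_1\to S$ for the projections, so that the squares $S_i=S\times_X X_i$ are Cartesian and $d\,d_0=d\,d_1=e$. By Lemma~\ref{fflat} the map $d$ is faithfully flat, and since the transition maps $d_0,d_1\colon X_1\to X_0$ are flat (as they are in all the patching contexts considered, and as is needed to apply Lemma~\ref{exact2eq} in the proof of Lemma~\ref{fflat}), the map $e$ is flat as well. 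Consequently the canonical comparison maps $d^*\mathcal{E}\to\underline{\Hom}_{\cO_{S_0}}(d^*\mathcal{G},d^*\mathcal{H})$ and $e^*\mathcal{E}\to\underline{\Hom}_{\cO_{S_1}}(e^*\mathcal{G},e^*\mathcal{H})$ are isomorphisms, since flat pullback commutes with internal $\Hom$ out of a finitely presented sheaf.

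Next I would identify the relevant $\Hom$-set in $\Coh(S_\bullet)$ as an equalizer. By definition of the $2$-equalizer $\Coh(S_\bullet)=\eq(d_0^*,d_1^*)$, and because $d\,d_0=d\,d_1=e$, a morphism between the images of $\mathcal{G}$ and $\mathcal{H}$ is a morphism $f\colon d^*\mathcal{G}\to d^*\mathcal{H}$ on $S_0$ whose two pullbacks to $S_1$ agree. Thus $\Hom_{\Coh(S_\bullet)}$ between the images is the equalizer of $d_0^*,d_1^*\colon \Hom_{S_0}(d^*\mathcal{G},d^*\mathcal{H})\rightrightarrows\Hom_{S_1}(e^*\mathcal{G},e^*\mathcal{H})$. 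Taking global sections of the internal-$\Hom$ isomorphisms above identifies these groups with $\Gamma(S_0,d^*\mathcal{E})$ and $\Gamma(S_1,e^*\mathcal{E})$, while $\Hom_S(\mathcal{G},\mathcal{H})=\Gamma(S,\mathcal{E})$.

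Now I would push forward along $\pi$. Since $\pi$ is proper and $\mathcal{E}$ coherent, $\pi_*\mathcal{E}$ is coherent on $X$, and flat base change along the Cartesian squares yields natural isomorphisms $d^*\pi_*\mathcal{E}\cong\pi_{0*}d^*\mathcal{E}$ and $e^*\pi_*\mathcal{E}\cong\pi_{1*}e^*\mathcal{E}$. Taking global sections gives $\Gamma(X,\pi_*\mathcal{E})=\Gamma(S,\mathcal{E})$, $\Gamma(X_0,d^*\pi_*\mathcal{E})=\Gamma(S_0,d^*\mathcal{E})$ and $\Gamma(X_1,e^*\pi_*\mathcal{E})=\Gamma(S_1,e^*\mathcal{E})$, which are exactly $\Hom_X(\cO_X,\pi_*\mathcal{E})$, $\Hom_{X_0}(\cO_{X_0},d^*\pi_*\mathcal{E})$ and $\Hom_{X_1}(\cO_{X_1},e^*\pi_*\mathcal{E})$. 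Applying the fully faithful part of the patching equivalence $\Coh(X)\simto\Coh(X_\bullet)$ to the pair $(\cO_X,\pi_*\mathcal{E})$ identifies $\Hom_X(\cO_X,\pi_*\mathcal{E})$ with the equalizer of the two arrows between the latter two groups. Assembling all these identifications exhibits $\Hom_S(\mathcal{G},\mathcal{H})$ as the equalizer computing $\Hom_{\Coh(S_\bullet)}$ between the images, which is the desired full faithfulness.

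The step requiring the most care is bookkeeping rather than new input: one must verify that the flat base change isomorphisms are sufficiently natural that the two parallel arrows on the $X$-side correspond, under all identifications, to the maps $d_0^*,d_1^*$ on the $S$-side, and that the resulting bijection $\Hom_S(\mathcal{G},\mathcal{H})\simto\eq(\cdots)$ is the one induced by the functor $\Coh(S)\to\Coh(S_\bullet)$ itself (that is, by $f\mapsto d^*f$) rather than merely an abstract isomorphism of groups. Confirming this compatibility, together with the flatness of $e$ that legitimizes the internal-$\Hom$ base change, is the crux; the remaining ingredients—coherence of proper pushforward and flat base change—are standard.
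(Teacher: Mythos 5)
Your argument is correct, but it takes a genuinely different route from the paper's. The paper's proof treats the functor $T \mapsto \Hom_{\mathcal O_{S_T}}(W_T, V_T)$ as an (additive, affine) group scheme $G$ over $X$ and applies Theorem~\ref{tannaka main} to it, so that full faithfulness becomes the statement that $G(\cX) \to G(\cX_\bullet)$ is an equivalence of setoids; this is slick but leans on the whole Tannakian machinery and on the representability of that Hom functor by an affine $X$-scheme (an EGA~III--type fact which the paper does not justify and which ordinarily requires some flatness of $V$ over $X$). You bypass representability entirely: you push the internal Hom sheaf $\mathcal E$ forward along the proper morphism $\pi$, use coherence of $\pi_*\mathcal E$ and flat base change to identify the equalizer computing $\Hom_{\Coh(S_\bullet)}$ with the equalizer computing $\Hom_{\Coh(X_\bullet)}\bigl(\mathcal O_{X_\bullet}, (\pi_*\mathcal E)_\bullet\bigr)$, and then quote only the fully faithful half of the patching hypothesis on $X$. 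This is more elementary and avoids the representability issue. The one point to flag is your reliance on flatness of $d_0, d_1 \colon X_1 \to X_0$ (hence of $S_1 \to S$), which is not among the stated hypotheses of Theorem~\ref{relative patch}; but, as you observe, the paper's own Lemma~\ref{fflat} already needs exactness of $d_0^*, d_1^*$ to invoke Lemma~\ref{exact2eq}(2), so you are assuming nothing the paper does not, and it holds in all the examples. The compatibility bookkeeping you defer --- that the base-change and internal-Hom comparison isomorphisms intertwine the two parallel arrows, and that the resulting bijection is the one induced by $f \mapsto d^*f$ --- is genuinely necessary but routine, following from the naturality of both comparison maps.
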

\begin{proof}
Suppose we have coherent sheaves $W, V$ on $S$. Consider the (additive) group 
scheme $G$ on $X$ whose values on $T \to X$ are $\Hom_{\ms O_{S_T}}(W_T, 
V_T)$.
Thinking of this as a stack (with trivial inertia), Theorem~\ref{tannaka main} 
gives a 2-equalizer diagram (of setoids with abelian group structure)
\[\arrr{G(X)}{G(X_0)}{G(X_1)}{}{}\]
which can be identified with an isomorphism of abelian groups
\[\Hom_{S}(W, V) \simto \Hom_{S_\bullet}(W_{S_\bullet}, V_{S_\bullet})\]
\end{proof}

\begin{lem} \label{eimage-lemma}
The essential image of $\Coh(S) \to \Coh(S_\bullet)$
\begin{enumerate}
    \item \label{tclosed} is closed under $\otimes$,
    \item \label{pclosed} contains the image of $\Coh(X_\bullet)$ under 
pullback,
    \item \label{cclosed} is closed under formation of kernels and cokernels, and
    \item \label{eclosed} is closed under extensions.
\end{enumerate}
\end{lem}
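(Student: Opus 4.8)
The plan is to treat the four items in increasing order of difficulty, using throughout the functor $\eta\colon \Coh(S)\to\Coh(S_\bullet)$ together with the facts already established about it: it is symmetric monoidal, fully faithful (Lemma~\ref{fullyfaithful}), and faithfully exact (Lemma~\ref{fexact}); moreover kernels, cokernels and tensor products in the $2$-equalizer $\Coh(S_\bullet)$ are all computed componentwise in $\Coh(S_0)$ (Lemmas~\ref{exact2eq} and~\ref{monoidality}).

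For part~(\ref{tclosed}), I would note that $\eta$ is a morphism of symmetric monoidal categories: pullback $\Coh(S)\to\Coh(S_0)$ is monoidal, and by Lemma~\ref{monoidality} the tensor product on $\Coh(S_\bullet)$ is componentwise, so $\eta(W)\otimes\eta(W')\cong\eta(W\otimes W')$; hence a tensor product of essential-image objects is again in the essential image. For part~(\ref{pclosed}), I would invoke the hypothesis of Theorem~\ref{relative patch}, namely that patching holds for coherent sheaves on $X_\bullet\to X$: every object of $\Coh(X_\bullet)$ is isomorphic to the pullback of some $E\in\Coh(X)$. Since the square $S_\bullet\to X_\bullet$, $S\to X$ commutes, pulling $E$ back to $S_\bullet$ through $\Coh(X_\bullet)$ agrees with $\eta(\pi^*E)$ for $\pi\colon S\to X$, so the image of $\Coh(X_\bullet)$ lies in the essential image. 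For part~(\ref{cclosed}), given a morphism $f$ between two essential-image objects, full faithfulness lifts it uniquely to $g$ in $\Coh(S)$; as $\eta$ is exact and (co)kernels are computed componentwise, $\ker f=\ker(\eta g)\cong\eta(\ker g)$ and $\coker f\cong\eta(\coker g)$, both in the essential image.

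The substance is in part~(\ref{eclosed}). Given a short exact sequence $0\to\eta(W')\to\underline V\to\eta(W'')\to 0$ in $\Coh(S_\bullet)$, I want to produce $V\in\Coh(S)$ with $\eta(V)\cong\underline V$. The idea is to apply Theorem~\ref{tannaka main} to the stack $\cE$ of extensions with fixed sub- and quotient-object: for $T\to X$, let $\cE(T)$ be the groupoid of extensions $0\to W'_T\to V\to W''_T\to 0$ on $S_T$, with morphisms the isomorphisms of $V$ restricting to the identity on $W'_T$ and $W''_T$. By componentwise exactness (Lemma~\ref{exact2eq}) the given sequence amounts to a short exact sequence on $S_0$ together with a gluing isomorphism over $S_1$ compatible with the canonical gluings of $\eta(W')$ and $\eta(W'')$; that is exactly an object of the $2$-equalizer $\cE(X_\bullet)$. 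If $\cE$ is a Noetherian algebraic stack with affine stabilizers, then Theorem~\ref{tannaka main}, applied to the patching context $X_\bullet\to X$, gives an equivalence $\cE(X)\simto\cE(X_\bullet)$, so the object descends to an extension $0\to W'\to V\to W''\to 0$ on $S$ whose middle term satisfies $\eta(V)\cong\underline V$. (One could instead try to build a Mayer--Vietoris sequence for $\Ext^1$ in the $2$-equalizer and prove surjectivity of $\Ext^1_S\to\Ext^1_{S_\bullet}$, but this needs the same geometric input.)

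The main obstacle is thus verifying that $\cE$ satisfies the hypotheses of Theorem~\ref{tannaka main}. The affine-stabilizer condition is the clean point: an automorphism of an extension fixing sub and quotient has the form $\id+i\,h\,p$ for a unique $h\in\Hom_{S_T}(W''_T,W'_T)$, and since $p\,i=0$ composition corresponds to addition, so the automorphism group of any object of $\cE$ is the additive group scheme $\underline{\Hom}_{\ms O_S}(W'',W')$, which is affine over $X$ by the same representability already used for the Hom-group in Lemma~\ref{fullyfaithful}; this is where properness of $S\to X$ enters. Algebraicity and local finite presentation of $\cE$ I would obtain by realizing it as the Picard stack attached to the degree $[-1,0]$ truncation of the bounded coherent complex $R\pi_*\,R\,\underline{\Hom}_{\ms O_S}(W'',W')$ on $X$ (whose coherence again uses properness) and invoking the standard algebraicity of such linear/Picard stacks. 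The delicate part, and where I expect the real work to lie, is the base-change and flatness bookkeeping needed to identify the $T$-points of this stack with genuine extensions of $W''_T$ by $W'_T$.
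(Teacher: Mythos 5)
Your treatments of parts~(\ref{tclosed}), (\ref{pclosed}) and (\ref{cclosed}) are correct and essentially identical to the paper's: monoidality of the $2$-equalizer for the tensor statement, the commutative square $\Coh(X)\to\Coh(X_\bullet)$ versus $\Coh(S)\to\Coh(S_\bullet)$ combined with essential surjectivity of $\Coh(X)\to\Coh(X_\bullet)$ for pullbacks, and full faithfulness plus faithful exactness for kernels and cokernels.

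Part~(\ref{eclosed}) is where you diverge, and where your argument has a genuine gap. The reduction itself is fine: an object of $\Coh(S_\bullet)$ sitting in $0\to\eta(W')\to\underline V\to\eta(W'')\to 0$ lies in the essential image once the extension descends. But your route to descent is to apply Theorem~\ref{tannaka main} to a stack $\cE$ of extensions over $X$, and you never establish that $\cE$ is a Noetherian algebraic stack with affine stabilizers --- you explicitly defer ``the base-change and flatness bookkeeping,'' and that deferral hides the real obstruction. Since $W'$ and $W''$ are arbitrary coherent sheaves on $S$, not flat over $X$, the assignment $T\mapsto\{\text{extensions of }W''_T\text{ by }W'_T\text{ on }S_T\}$ has poor base-change behaviour: the complex $R\pi_*R\underline{\Hom}(W'',W')$ does not compute $\Ext^1_{S_T}(W''_T,W'_T)$ after base change (one would need $LW''_T$ rather than $W''_T$, and $W'_T$ need not agree with a derived pullback), so the $T$-points of the proposed linear/Picard stack are not the groupoid you defined, and algebraicity and local finite presentation of the honest extension groupoid are not standard facts in this generality. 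In effect you have replaced the lemma by a harder moduli problem. The paper avoids all of this with a purely categorical argument: by the Yoneda description of $\Ext^1$ (citing Oort), closure under extensions follows once $\Ext^1_S(W'',W')\to\Ext^1_{S_\bullet}(\eta W'',\eta W')$ is bijective, and this is deduced from full faithfulness (preservation of $\Hom$) together with faithful exactness of $\Coh(S)\to\Coh(S_\bullet)$ --- no representability or flatness enters. Your parenthetical remark that a Mayer--Vietoris/$\Ext^1$-surjectivity argument ``needs the same geometric input'' is therefore not accurate; that is precisely the input-free route the paper takes, and it is the one you should pursue to close the gap.
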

\begin{proof}
Part \ref{tclosed} follows from Remark~\ref{monoidality}. Part \ref{pclosed} 
follows from the fact that we have a commutative diagram
\[\xymatrix{
\Coh(X) \ar[r] \ar[d] & \Coh(X_0) \ar[d] \ar@<-.5ex>[r] \ar@<.5ex>[r] & 
\Coh(X_1) \ar[d] \\
\Coh(S) \ar[r] & \Coh(S_0) \ar@<-.5ex>[r] \ar@<.5ex>[r] & \Coh(S_1). \\
}\]
For part \ref{cclosed}, we will consider the case of cokernels (the case of 
kernels is similar). Suppose we are given a right exact sequence
\[ F \to G \to C \to 0\]
in $\Coh(S_\bullet)$, with $F, G$ in the image of $F', G'$ in $\Coh(S)$. Let
\[F' \to G' \to C' \to 0\] be right exact in $\Coh(S)$ (i.e. $C'$ is the
cokernel in $\Coh(S)$). Since $\Coh(S) \to \Coh(S_\bullet)$ is faithfully exact
(by Lemma~\ref{fexact}), it follows that we have an isomorphism between $C$ and
the image of $C'$ showing that $C$ is in the essential image as desired.

Using \cite[Thm~3.5]{Yoneda} or 
\cite{Oort:Yoneda}, part~\ref{eclosed} follows so long as formation of $\Ext$ groups 
commutes with the functor $\Coh(S) \to \Coh(S_\bullet)$.
But this follows from the fact that the functor is fully faithful (and hence 
preserves the $\Hom$ functor) and is faithfully exact (and hence commutes with the 
construction of the derived functor).
\end{proof}

\begin{lem}\label{lem:projective case described above}
If $S$ is a projective $X$-scheme then $\Coh(S) \to 
\Coh(S_\bullet)$ is essentially surjective.
\end{lem}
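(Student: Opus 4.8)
The plan is to realize an arbitrary object of $\Coh(S_\bullet)$ as the cokernel of a map between objects already known to lie in the essential image, using the relative projective structure of $S\to X$ to manufacture the needed surjections. Write $\pi\colon S\to X$ for the structure map and $\pi_0\colon S_0\to X_0$, $\pi_1\colon S_1\to X_1$ for its base changes (each projective, since $\pi$ is and $S_\bullet = S\times_X X_\bullet$), and fix a relatively very ample line bundle $\mathcal{O}_S(1)$; its pullbacks give compatible relatively very ample bundles $\mathcal{O}_{S_0}(1)$ and $\mathcal{O}_{S_1}(1)$. Because $\mathcal{O}_S(1)$ is invertible on $S$, its image in $\Coh(S_\bullet)$ is an invertible object, so tensoring by it is an autoequivalence of $\Coh(S_\bullet)$ preserving the essential image (part~\ref{tclosed} of Lemma~\ref{eimage-lemma}); thus I may twist and untwist freely throughout.

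The main step is the claim that every object $(\mathcal{F},\phi)$ of $\Coh(S_\bullet)$ admits a surjection from an object in the essential image. To prove it I would first choose $n\gg 0$ so that relative Serre vanishing holds for the coherent sheaf $\mathcal{F}$ on the projective $X_0$-scheme $S_0$: the higher direct images $R^q\pi_{0*}(\mathcal{F}(n))$ vanish for $q>0$, and the counit $\pi_0^*\pi_{0*}(\mathcal{F}(n))\to \mathcal{F}(n)$ is surjective. Since the higher direct images vanish, cohomology and base change shows that the formation of $\pi_{0*}(\mathcal{F}(n))$ commutes with the base change maps $d_0,d_1\colon X_1\to X_0$; as the twist by $\mathcal{O}_S(n)$ is itself compatible with these base changes, the gluing datum $\phi$ transports through $\pi_{0*}$ to a gluing datum $\psi$, producing an object $(\pi_{0*}(\mathcal{F}(n)),\psi)$ of $\Coh(X_\bullet)$.

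Now patching for coherent sheaves on $X_\bullet\to X$, which is the hypothesis of Theorem~\ref{relative patch}, identifies $(\pi_{0*}(\mathcal{F}(n)),\psi)$ with the image of a coherent sheaf $\mathcal{M}$ on $X$. Pulling back along $\pi$ and using naturality of the counit, the image of $\pi^*\mathcal{M}$ in $\Coh(S_\bullet)$ surjects onto $(\mathcal{F}(n),\phi)$; untwisting by $\mathcal{O}_S(-n)$ gives a surjection onto $(\mathcal{F},\phi)$ from the image of the genuine coherent sheaf $\pi^*\mathcal{M}\otimes\mathcal{O}_S(-n)$ on $S$, which proves the claim. I would then iterate: applying the claim to $(\mathcal{F},\phi)$ gives $\mathcal{E}_0\twoheadrightarrow(\mathcal{F},\phi)$ from the essential image, and applying it again to the kernel $\mathcal{K}$ (an object of the abelian category $\Coh(S_\bullet)$) gives $\mathcal{E}_1\twoheadrightarrow\mathcal{K}$, yielding a presentation $\mathcal{E}_1\to\mathcal{E}_0\to(\mathcal{F},\phi)\to 0$ with $\mathcal{E}_0,\mathcal{E}_1$ in the essential image. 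By full faithfulness (Lemma~\ref{fullyfaithful}) the map $\mathcal{E}_1\to\mathcal{E}_0$ descends to a morphism in $\Coh(S)$, and closure of the essential image under cokernels (part~\ref{cclosed} of Lemma~\ref{eimage-lemma}) then places $(\mathcal{F},\phi)$ in the essential image.

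I expect the delicate point to be the base change compatibility that transports the gluing datum through the pushforward. The maps $d_i$ are not assumed flat, so naive flat base change is unavailable and one genuinely needs the vanishing $R^{q}\pi_{0*}(\mathcal{F}(n))=0$ in order to invoke cohomology and base change; moreover some care is required to verify that the transported datum $\psi$ is precisely the one for which patching on $X$ returns a sheaf whose pullback surjects onto $\mathcal{F}(n)$ compatibly with $\phi$. The remaining bookkeeping — compatibility of twisting with the base changes, and naturality of the counit with respect to the gluing — is routine once this point is settled.
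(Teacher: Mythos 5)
Your proposal follows the same route as the paper's own proof: produce a surjection onto a high twist of $(\mathcal{F},\phi)$ from the image of $\pi^*\pi_*$, repeat on the kernel to get a two-step presentation by objects in the essential image, conclude via full faithfulness and closure under cokernels, and untwist. The one place you diverge is in justifying the ``delicate point'' you correctly isolate, namely that the gluing datum $\phi$ transports through $\pi_{0*}$ to yield an object of $\Coh(X_\bullet)$. Your proposed justification --- vanishing of $R^q\pi_{0*}(\mathcal{F}(n))$ for $q>0$ plus ``cohomology and base change'' --- does not work as stated: the cohomology-and-base-change theorems require $\mathcal{F}(n)$ to be flat over $X_0$ (or the relevant square to be Tor-independent), and an arbitrary object of $\Coh(S_0)$ carries no such flatness; vanishing of higher direct images alone does not make $\pi_{0*}$ commute with a non-flat base change. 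The correct (and simpler) justification is flat base change: for the 2-equalizer $\Coh(X_\bullet)$ to be abelian in the way the whole section uses it (Lemma~\ref{exact2eq}, Remark~\ref{exact remark}, Lemma~\ref{fexact}), the functors $d_0^*,d_1^*$ must be exact, i.e.\ $d_0,d_1$ are flat, and then $d_i^*\pi_{0*}\cong\pi_{1*}d_i^*$ for the proper morphism $\pi_0$ with no twisting or vanishing hypotheses needed. (The paper itself silently asserts that $\pi_*$ induces a functor $\Coh(S_\bullet)\to\Coh(X_\bullet)$, so you are being more careful than the source --- just with an inapplicable tool.) A second, purely cosmetic difference: you descend $\pi_{0*}(\mathcal{F}(n))$ all the way to a sheaf $\mathcal{M}$ on $X$ via the patching hypothesis, whereas the paper only invokes the fact that pullbacks of objects of $\Coh(X_\bullet)$ lie in the essential image (Lemma~\ref{eimage-lemma}(\ref{pclosed})); these amount to the same thing. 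Everything else in your argument is sound and matches the paper.
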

\begin{proof}
Let $V_\bullet = (V, \phi)$ be an object of $\Coh(S_\bullet)$. Let $\pi: S 
\to X$ be the structure morphism. We will write $\pi_*$ and $\pi^*$ for the 
standard pushforward and pullback maps, as well as for the natural functors 
they induce between the categories $\Coh(S_\bullet)$ and $\Coh(X_\bullet)$.

By tensoring with image of the relatively ample invertible sheaf $\mc O_{S}(n)$, we can define 
$V_\bullet(n)$ for any $n \in \mathbf Z$. For any such integer $n$, note that 
we have a canonical morphism $\pi^* \pi_* V_\bullet(n) \to V_\bullet(n)$, and 
since the functor $\Coh(S_\bullet) \to \Coh(S_0)$ is faithfully exact by 
Lemma~\ref{fexact}, it follows that this is surjective for $n\gg 0$.
Let $K_\bullet$ be the kernel of this morphism. Again, for some $m \gg 0$, we 
have a surjection $\pi^* \pi_* K_\bullet(m) \to K_\bullet(m)$. Taken together, 
this gives a right exact sequence:
\[ \pi^* \pi_* K_\bullet(m) \to (\pi^* \pi_* V_\bullet(n))(m) \to V_\bullet(n + 
m). \]
By Lemma~\ref{eimage-lemma}(\ref{tclosed},\ref{pclosed}), $\pi^* \pi_* 
K_\bullet(m)$ and $(\pi^* \pi_* V_\bullet(n))(m)$ are in the essential image.
By Lemma~\ref{fullyfaithful}, the morphism between them is as well, and by 
Lemma~\ref{eimage-lemma}(\ref{cclosed}), so is their cokernel. Therefore 
$V_\bullet(n + m)$ is in the essential image.
But since we may tensor with $\mc O(-n - m)$ by 
Lemma~\ref{eimage-lemma}(\ref{tclosed}) and stay in the essential image as 
well, this says that $V_\bullet$ is in the essential image as claimed.
\end{proof}

\begin{proof}[Proof of Theorem~\ref{relative patch}]
We proceed by Noetherian induction (on $\cS$), the case of $\cS=\emptyset$ being clear. By Chow's Lemma \cite[\href{https://stacks.math.columbia.edu/tag/088U}{Tag 088U}]{stacks-project}, there is a proper birational map $\theta:\widetilde S\to S$ with $\widetilde S\to X$ a projective morphism. There results a map of diagrams
$$\theta_\bullet:\widetilde S_\bullet\to S_\bullet$$
and associated functors
$$(\theta_\bullet)^\ast:\Coh(S_\bullet)\to\Coh(\widetilde S_\bullet)$$
and
$$(\theta_\bullet)_\ast:\Coh(\widetilde S_\bullet)\to\Coh(S_\bullet)$$

Given an object $V_\bullet\in\Coh(S_\bullet)$, there is a canoncal map
$$V_\bullet\to(\theta_\bullet)_\ast(\theta_\bullet)^\ast V_\bullet$$
that is an isomorphism over a dense open subspace of $S$. By Lemma \ref{lem:projective case described above}, there is thus a coherent sheaf $F$ on $\widetilde S$ and a map
$$s:V_\bullet\to(\theta_\ast F)_\bullet$$
that is an isomorphism over a dense open of $S$. The kernel and cokernel of $s$ are supported over proper closed subspaces of $S$. 

By Lemma~\ref{eimage-lemma}(\ref{cclosed}) and Lemma~\ref{eimage-lemma}(\ref{eclosed}), it suffices to show that $\ker(s)$ and $\coker(s)$ lie in the essential image of $\Coh(S)$. But this follows from the Noetherian induction hypothesis.
\end{proof}

\bibliographystyle{alpha}
\bibliography{citations}

\end{document}